\theoremstyle{lemma}
\newtheorem{theorem}{Theorem}
\theoremstyle{definition}
\newtheorem{question}{Question}
\theoremstyle{lemma}
\newtheorem{lemma}[theorem]{Lemma}
\theoremstyle{lemma}
\newtheorem{corollary}[theorem]{Corollary}
\theoremstyle{lemma}
\newtheorem{proposition}[theorem]{Proposition}
\theoremstyle{definition}
\theoremstyle{definition}
\newtheorem{remark}[theorem]{Remark}
\theoremstyle{definition}
\newtheorem{example}{Example}
\numberwithin{equation}{section}
\DeclareMathOperator{\supp}{supp}
\newcommand{\norm}[1]{\left\lVert#1\right\rVert}
\newcommand{\abs}[1]{\left\lvert#1\right\rvert}
\newcommand{\fun}[1]{\left\langle#1\right\rangle}
\begin{document}
\title{Weakly compact sets and weakly compact pointwise multipliers in Banach function lattices}

\author[Le{\'s}nik]{Karol Le{\'s}nik}

\author[Maligranda]{Lech Maligranda}

\author[Tomaszewski]{Jakub Tomaszewski}
\maketitle

\begin{abstract}
We prove that the class of Banach function lattices in which all relatively weakly compact sets are 
equi-integrable sets (i.e. spaces satisfying the Dunford--Pettis criterion) coincides with the class 
of $1$-disjointly homogeneous Banach lattices. A new examples of such spaces are provided. 
Furthermore, it is shown that Dunford--Pettis criterion is equivalent to de la Valle\'e Poussin 
criterion in all rearrangement invariant spaces on the interval. Finally, the results are applied 
to characterize weakly compact pointwise multipliers between Banach function lattices.  
\end{abstract}

\newcommand{\Addresses}{{
  \footnotesize
  K. Le{\'s}nik (Corresponding Author) and J. Tomaszewski, \textsc{Institute of Mathematics,
    Pozna{\'n} University of Technology, Poland, }
  \textit{e-mails:} \texttt{klesnik@vp.pl and jakub.tomaszewski@put.poznan.pl}

  L. Maligranda, \textsc{Department of Engineering Sciences and Mathematics, Lule\aa\ University\\
 of Technology, Sweden, }
  \textit{e-mail:} \texttt{lech.maligranda@ltu.se}
}}

\footnotetext[1]{2010 \textit{Mathematics Subject Classification}: Primary 46E30; Secondary 46B20, 
46B42, 46A50}

\footnotetext[2]{\textit{Key words and phrases}: Banach function spaces, rearrangement invariant 
spaces, Orlicz spaces, weakly compact sets, pointwise multipliers}
\footnotetext[3]{\Addresses}

\section{Introduction}

It is a classical question, whether one can describe all weakly compact sets in a given non-reflexive 
Banach space. The most well-known result in this direction is the Dunford--Pettis theorem, which 
says that relatively weakly compact sets in $L^1[0,1]$ are exactly the so-called equi-integrable 
sets (sometimes called uniformly-integrable) -- see \cite[pp. 376--378]{DP40} (see 
also \cite[Theorem 5.2.9]{AK06}, \cite[Theorem 5.2.9]{Di84} and \cite[Corollary 11, p. 294]{DS58}). 
This theorem has found many applications in analysis as well in probability theory. Therefore, not 
surprisingly that much attention was paid to deciding whether such kind of result can be extended 
to other separable non-reflexive Banach function spaces. 

In fact, it is independently interesting that Orlicz \cite{Or36} was the first who proved a theorem 
in this direction and he did so four years before Dunford and Pettis. Namely, in 1936 he proved 
that in the Orlicz space $L^{\varphi}[0,1]$ relatively weakly compact sets are exactly 
$L^{\varphi}$-equi-integrable sets, provided the function $\varphi$ is an $N$-function satisfying 
$\Delta_2$ and if the complementary function $\varphi^*$ to $\varphi$ satisfies the condition
\begin{equation} \label{1}
\lim_{u \rightarrow \infty} \frac{\varphi^*(2u)}{\varphi^*(u)} = \infty.
\end{equation}
These Orlicz spaces $L^{\varphi}[0,1]$ in a certain sense resemble the $L^1$-spaces but 
the assumption of being $N$-function exclude $L^1[0,1]$ space from Orlicz considerations. 
In 1978, Luxemburg \cite{Lu78} recalled Orlicz's result on relatively weakly compact sets 
in Orlicz spaces which at that time has not received the attention it deserves.

Later on, in 1994, Alexopoulos proved once again the Orlicz result, but he assumed a slightly 
weaker condition than (\ref{1}). Namely, he showed (cf. \cite[Corollary 2.9]{Al94}) that if an 
$N$-function $\varphi \in \Delta_2^{\infty}$ and its complementary function $\varphi^*$ 
satisfies 
\begin{equation} \label{2}
\lim_{u \rightarrow \infty} \frac{\varphi^*(\lambda u)}{\varphi^*(u)} = \infty ~{\rm for ~ some} ~\lambda > 1,
\end{equation}
then a bounded set $K \subset L^{\varphi}$ is relatively weakly compact if and only if
$K$ is $L^{\varphi}$-equi-integrable. 

The property (\ref{2}) appears to be crucial in the sequel, thus we say that an Orlicz function $\varphi$ 
satisfies the {\it $\Delta_0$-condition} and we write $\varphi \in \Delta_0$ if for some $\lambda > 1$ 
\begin{equation} \label{delta0}
\lim_{u \rightarrow \infty} \frac{\varphi (\lambda u)}{\varphi (u)} = \infty.
\end{equation}

Since Orlicz spaces are members of the wide class of Banach function lattices, a natural question 
arises, in which Banach function lattices does the analogous characteristics of relatively weakly 
compact sets remains valid? Such spaces are said to satisfy the {\it Dunford--Pettis criterion}, 
according to \cite{AKS08}. Quite recently, in 2008, the question was completely answered for 
rearrangement invariant space on $[0,1]$. Namely, Astashkin, Kalton and Sukochev in 
\cite[Theorem 5.5]{AKS08} proved that a rearrangement invariant space on $[0,1]$ satisfies 
the Dunford--Pettis criterion if and only if it has the property $(Wm)$, i.e. convergence in 
measure together with weak convergence implies convergence in the norm. Their method 
is based on techniques attributes to rearrangement invariant spaces and descriptions of 
weakly compact sets in such spaces from paper \cite{DSS01}.

Very recently Astashkin in \cite[Theorem 3.4]{As19a} found yet another characterization of 
rearrangement invariant spaces on $[0,1]$ satisfying the Dunford--Pettis criterion. Namely, 
he proved that this class coincides with the class of $1$-disjointly homogeneous spaces (cf. also 
\cite[Theorem 3.3]{As19a} and \cite[Proposition 4.9]{FHSTT14}). There is also result that 
$1$-disjointly homogeneous lattices are the same as the lattices with the positive Schur 
property (see \cite[Theorem 7]{Wn93} and \cite[Proposition 4.9]{FHSTT14}).

In the paper we extend Astashkin's characterization to the whole class of separable Banach function 
lattices (not only rearrangement invariant spaces) over arbitrary nonatomic measure space. Namely, 
our main result says that a separable Banach function lattice satisfies the Dunford--Pettis criterion 
if and only if it is $1$-disjointly homogeneous space. Our method is quite elementary and different 
than the one in \cite{As19a}.


The paper is organized as follows. In the next section we provide necessary definitions and discuss 
the notion of $X$-equi-integrable sets. The third section contains the main results. Firstly, we show 
that in Banach function spaces $X$-equi-integrable sets are not only relatively weakly compact, 
but even Banach--Saks sets. Next, we compare de la Vall\'ee Poussin condition with 
$X$-equi-integrability and show that those two notions are equivalent in the class of 
rearrangement invariant spaces on $[0,1]$, while de la Vall\'ee Poussin condition cannot 
be extended to spaces on $[0,\infty)$. This section is finished by the proof of the main 
result -- Theorem \ref{charakteryzacja}. In the fourth section we give new examples of $1$-disjointly 
homogeneous rearrangement invariant spaces on $[0,\infty)$. Moreover, we are able to characterize 
all $1$-disjointly homogeneous Orlicz spaces on $[0,\infty)$ in quite a constructive way (compare 
with another characterization from \cite[Theorem 5.1]{FHSTT14}). In the fifth section we comment 
the condition (\ref{2}) on Orlicz function and give a number of examples. Finally, in the last 
section we apply previous results to discuss weak compactness of pointwise multipliers. 

\section{Notation and preliminaries}
\subsection{Banach function lattices}
Let $(\Omega,\Sigma, \mu)$ be a $\sigma$-finite, complete and nonatomic measure space. 
As usual,  $L^{0}=L^{0}(\Omega)$ is the space of all (equivalence classes of) real-valued 
$\Sigma$-measurable functions defined on $\Omega$. A Banach space $X \subset L^0$ 
is said to be a {\it Banach function lattice} if:
\begin{itemize}
\item[$(i)$] $f\in L^{0}$, $g\in X$ and $|f|\leq |g|$ a.e. on $\Omega$ implies that $f\in X$
and $\Vert f\Vert \leq \Vert g\Vert $,
\item[$(ii)$] $X$ has a weak unit, i.e. an element $f\in X$ such that $f(t)>0$ for a.e. $t\in \Omega$.
\end{itemize}
 

A Banach function lattice $X$ is said to satisfy the {\it Fatou property} if for each sequence 
$(f_n)\subset X$ satisfying $f_n\uparrow f$ $\mu$-a.e. on $\Omega$ and 
$\sup_{n \in \mathbb N}\|f_n\|_X<\infty$, there holds $f\in X$ and $\norm{f}_X\leq \sup_n\norm{f_n}_X$. 
 
An element $f\in X$  is said to be {\it order continuous} if for any sequence $(f_n)\subset X$ with 
$0 \leq f_n \leq |f|$ and $f_n \rightarrow 0$ $\mu$-a.e. on $\Omega$ there holds $\norm{f_n}_{X} \rightarrow 0$.
By $X_{a}$ we denote the subspace of all order continuous elements of $X$. A Banach function space $X$ 
is called {\it order continuous} (we write $X\in (OC)$ for short) if $X_a = X$. It will be used few times 
in the sequel that $f\in X_{a}$ if and only if $\norm{f\chi _{A_{n}}}_{X} \rightarrow 0$ for any sequence 
$(A_n)$ satisfying $A_n \downarrow \emptyset$,  where $A_n \downarrow \emptyset$ means that 
$(A_n)$ is decreasing sequence of measurable sets with the intersection of measure zero 
(see \cite[Proposition 3.5, p. 15]{BS88}). The subspace $X_a$ is always closed in $X$ 
(see \cite[Th. 3.8, p. 16]{BS88}).

with equal sets they norms must be equivalent. 

An important class of Banach function lattices constitute rearrangement invariant spaces. 
Consider $I=[0,\alpha)$, where $0<\alpha\leq \infty$ with the Lebesgue measure $m$. 
Recall that the distribution function of $f\in L^0(I)$ is defined by
$$
d_f(\lambda) = m(\{t \in I : |f(t)| > \lambda\})
$$
for $\lambda \geq 0$. We say that two functions  $f, g \in L^{0}(I)$  are equimeasurable when 
they have the same distribution functions, i.e. $d_f \equiv d_y$. Then a Banach function lattice 
$X$ on $I$ is called {\it rearrangement invariant} (or {\it symmetric}) if for two given equimeasurable 
functions $f, g \in L^{0}(I)$ with $f\in X$ there holds $g \in X$ and $\norm{f}_{X} = \norm{g}_{X}$. 
In particular, $\norm{f}_{X} = \norm{f^{*}}_{X}$, where $f^{*}$ is the non-decreasing  rearrangement 
of $f$, i.e.,
$$
f^{*}(t) := \inf\{\lambda > 0 : d_f(\lambda) \leq t \}
$$
for $t \geq 0$. 
For more information on rearrangement invariant spaces we refer to books \cite{BS88}, \cite{KPS82} 
and \cite{LT79}.

If not specified otherwise, we will understand that all general Banach function lattices are defined on 
an arbitrary $\sigma$-finite, complete and nonatomic measure space $(\Omega,\Sigma, \mu)$, while 
all rearrangement invariant spaces are on $I=[0,1]$ or $I=[0,\infty)$ with the Lebesgue measure $m$. 

\subsection{Orlicz functions and Orlicz spaces}
In this this paper $\varphi$ will always denote an {\it Orlicz function}, i.e. a  continuous increasing 
and convex function $\varphi \colon [0,\infty)\to [0,\infty)$ such that $\varphi(0)=0$. We will assume 
that Orlicz function $\varphi$ is {\it coercive}, i.e. it has the additional property of 
$\lim_{u \rightarrow \infty} \frac{\varphi(u)}{u} = \infty$, or is an {\it N-function} if additionally
\begin{equation} \label{N-function}
\lim_{u \rightarrow 0} \frac{\varphi(u)}{u} = 0 ~~{\rm and} ~~\lim_{u \rightarrow \infty} \frac{\varphi(u)}{u} = \infty.
\end{equation}
Property of coerciveness of $\varphi$ exclude the case of $\varphi(u) = a u$, but ensures that the conjugate 
function has finite values. For an Orlicz function $\varphi$ the conjugate function $\varphi^*$ is defined by
\begin{equation*} 
\varphi^*(v)=\sup_{u \geq 0} \, [uv - \varphi(u)], ~ v \geq 0.
\end{equation*} 
Then $\varphi^*$ is finite-valued if and only if $\varphi$ is coercive. Moreover, if $\varphi$ is an N-function 
then $\varphi^*$ is also an N-function. To avoid pathologies through the paper, we will understand that all 
Orlicz functions are coercive.

We say that an Orlicz function $\varphi$ satisfies the {\it $\Delta_2$-condition for large u} 
(or {\it at infinity}) and we write $\varphi \in \Delta_2^{\infty}$ if 
$\limsup_{u \rightarrow \infty} \frac{\varphi (2u)}{\varphi (u)} < \infty$ or, equivalently, there 
exist constants $C > 1$ and $u_0 \geq 0$ such that
\begin{equation} \label{delta2}
\varphi (2u) \leq C \varphi (u) ~~ {\rm for ~all} ~u \geq u_0.
\end{equation}  
If condition (\ref{delta2}) holds with $u_0 = 0$, then we say that an Orlicz function 
$\varphi$ satisfies the {\it $\Delta_2$-condition for all u} and we write $\varphi \in \Delta_2^a$.

Further definitions, properties and results about Orlicz's functions or N-functions are taken from the 
books \cite{KR61}, \cite{Ma89} and \cite{RR91}.

The {\it Orlicz space} $L^{\varphi} = L^{\varphi}(\Omega)$ on a $\sigma$-finite complete nonatomic 
measure space $(\Omega, \mu)$ is the space of all $f\in L^0(\Omega)$ satisfying
$I_{\varphi}(\lambda f) < \infty {\rm\ for\ some}\ \lambda = \lambda(f) > 0$, where the modular $I_\varphi$ 
is given by
$$
I_{\varphi}(f):= \int_{\Omega} \varphi (|f(x)|) \, d\mu(x).
$$
This space is a Banach function lattice with the {\it Luxemburg--Nakano norm} 
defined as
$$
\norm{f}_{\varphi}:= \inf\{\lambda>0 \colon I_{\varphi}( f/\lambda) \leq 1\}.
$$
For $\Omega = I$ with the Lebesgue measure $m$,  
the Orlicz space $L^{\varphi}(I)$ is a rearrangement-invariant space 
with the Fatou property (see \cite{BS88} and \cite{KPS82}).
\vspace{2mm}

\subsection{Equi-integrable sets}
Let us recall that, classically, a bounded subset $K$ of $L^1[0,1]$ is called {\it equi-integrable} 
(or {\it uniformly integrable}) when for each $\varepsilon > 0$ there is $\delta = \delta(\varepsilon) >0$ 
such that for every set $A \subset [0, 1]$ with $m(A)<\delta$ we have
$$
\sup_{f\in K} \int_A |f| \, dm = \sup_{f\in K} \norm{f\chi_A}_1 < \varepsilon, 
$$
i.e., $\lim_{m(A) \rightarrow 0} \sup_{f\in K}  \int_A |f| \, dm = 0$.
This notion generalizes easily to an arbitrary Banach function lattice on a finite measure 
space -- it is enough to replace the norm $\|\cdot\|_1$ by an abstract norm $\|\cdot\|_X$. However, it 
is not the right definition when we wish to deal with an infinite measure spaces as well. 
Therefore, we define $X$-equi-integrable sets in a slightly different way. 

Let $X$ be a Banach function lattice and $K\subset X$ be a bounded set. We say that a set 
$K$ is {\it $X$-equi-integrable} (or $K$ has {\it equi-absolutely continuous norms in $X$}) 
if for each sequence of measurable sets $(A_n)\subset \Omega$ such that 
$A_n\downarrow\emptyset$, there holds 
$$
 \sup\limits_{f\in K}\norm{f\chi_{A_n}}_X\rightarrow 0 \text{ when } n\rightarrow\infty.
$$
The following trivial lemma ensures that in case of finite measure our definition of $X$-equi-integrability 
coincides with another ones. 

\begin{lemma}\label{ei-rownowazne}
{\it Let $X$ be a Banach function lattice on a finite measure space $(\Omega, \mu)$ and let $K\subset X$. 
The following statements are equivalent:
\begin{itemize}
\item[(i)] For each $\varepsilon > 0$ there is $\delta>0$ such that for every $A \subset \Omega$ with 
$\mu(A)<\delta$ we have 
$$
\sup_{f\in K}\norm{f\chi_A}_X < \varepsilon. 
$$
\item[(ii)] For each sequence of measurable sets $(A_n), A_n \subset \Omega$ such that $\mu(A_n) \to 0$ 
there holds 
$$
 \sup\limits_{f\in K}\norm{f\chi_{A_n}}_X\rightarrow 0 \text{ when } n\rightarrow\infty.
$$
\item[(iii)] For each sequence of measurable sets $(A_n)$, $A_n \subset \Omega$ such that $A_n\downarrow\emptyset$ 
there holds 
$$
 \sup\limits_{f\in K}\norm{f\chi_{A_n}}_X\rightarrow 0 \text{ when } n\rightarrow\infty.
$$
\end{itemize}
}
\end{lemma}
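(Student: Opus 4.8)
The plan is to establish the cyclic chain of implications $(i)\Rightarrow(ii)\Rightarrow(iii)\Rightarrow(i)$, where only the last one carries any real content. For $(i)\Rightarrow(ii)$: given a sequence $(A_n)$ with $\mu(A_n)\to 0$ and an $\varepsilon>0$, I would take the $\delta$ supplied by $(i)$ and observe that $\mu(A_n)<\delta$ for all sufficiently large $n$, whence $\sup_{f\in K}\norm{f\chi_{A_n}}_X<\varepsilon$ eventually, which is exactly $(ii)$. For $(ii)\Rightarrow(iii)$: if $A_n\downarrow\emptyset$ then, the measure space being finite, we have $\mu(A_1)<\infty$, so continuity of $\mu$ from above gives $\mu(A_n)\to\mu(\bigcap_n A_n)=0$, and $(ii)$ applies verbatim to the sequence $(A_n)$.

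The only substantial step is $(iii)\Rightarrow(i)$, which I would prove by contraposition. Assuming $(i)$ fails, there is an $\varepsilon_0>0$ and, for each $n\in\mathbb N$, a measurable set $B_n$ with $\mu(B_n)<2^{-n}$ but $\sup_{f\in K}\norm{f\chi_{B_n}}_X\ge\varepsilon_0$. The key is to replace the $B_n$ by the decreasing ``tail unions'' $A_n:=\bigcup_{k\ge n}B_k$: then $(A_n)$ is decreasing, $\mu(A_n)\le\sum_{k\ge n}2^{-k}=2^{1-n}\to 0$, so $\mu(\bigcap_n A_n)=0$ and hence $A_n\downarrow\emptyset$ in the sense adopted in the paper. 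Since $B_n\subseteq A_n$, the lattice axiom $(i)$ in the definition of a Banach function lattice yields $\norm{f\chi_{B_n}}_X\le\norm{f\chi_{A_n}}_X$ for every $f\in X$, and therefore $\sup_{f\in K}\norm{f\chi_{A_n}}_X\ge\varepsilon_0$ for all $n$, contradicting $(iii)$.

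No step here is genuinely hard; the points to be attentive to are just that the definition of $A_n\downarrow\emptyset$ used in the paper asks merely for $\mu(\bigcap_n A_n)=0$, which the dyadic choice $\mu(B_n)<2^{-n}$ secures (this is in essence the easy half of Borel--Cantelli), and that the monotonicity built into the lattice norm is precisely what lets one push the lower bound from $B_n$ up to the larger set $A_n$. Finiteness of $\mu$ is needed only in $(ii)\Rightarrow(iii)$, to guarantee $\mu(A_1)<\infty$ so that continuity from above is available; boundedness of $K$ plays no role.
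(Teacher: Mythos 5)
Your proposal is correct and follows essentially the same route as the paper: the first two implications are dismissed as immediate, and the substantive step $(iii)\Rightarrow(i)$ is handled by choosing sets $B_n$ with $\mu(B_n)<2^{-n}$ witnessing the failure of $(i)$ and passing to the tail unions $A_n=\bigcup_{k\ge n}B_k$, which decrease to a null set while dominating the $B_n$. Your write-up merely makes explicit the measure estimate and the lattice monotonicity that the paper leaves implicit.
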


\proof
Evidently, $(i)\implies (ii)\implies (iii)$, thus we will explain only $(iii)\implies (i)$. 
Suppose there is $\varepsilon>0$ such that for each $n\in \mathbb{N}$ there is $B_n \subset \Omega$ 
with $\mu(B_n)<1/2^n$ satisfying
$$
\sup\limits_{f\in K}\norm{f\chi_{B_n}}_X> \varepsilon.
$$
It is enough to take $A_n=\bigcup_{k=n}^{\infty}B_k$, to see that $(iii)$ does not hold. 
\endproof

In the sequel we will need the following technical lemma generalizing Lemma 2.6 from \cite{Al94}. 

\begin{lemma}\label{zbiory} 
Let $X$ be an order continuous Banach function lattice on a measure space $(\Omega, \mu)$. 
If a bounded set $K\subset X$ is not $X$-equi-integrable, then there exists a sequence $(f_n)\subset K$, 
a number $\varepsilon>0$ and a sequence of disjoint measurable sets $(A_n), A_n \subset \Omega$ 
such that for every $n \in \mathbb N$ 
$$
\norm{f_n\chi_{A_n}}_X > \varepsilon.
$$
\end{lemma}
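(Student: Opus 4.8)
The plan is to extract the sequence and sets by a diagonal-type argument exploiting the failure of $X$-equi-integrability together with order continuity. First I would unwind the negation of the definition: since $K$ is not $X$-equi-integrable, there is some $\varepsilon_0>0$ and a sequence $B_m\downarrow\emptyset$ with $\sup_{f\in K}\norm{f\chi_{B_m}}_X>\varepsilon_0$ for every $m$. Passing to this fixed decreasing sequence, choose for each $m$ a function $g_m\in K$ with $\norm{g_m\chi_{B_m}}_X>\varepsilon_0$. The sets $B_m$ are not disjoint — they are nested — so the main work is to replace the tails $B_m$ by genuinely disjoint pieces $A_n$ on which the norms are still bounded below, and this is exactly where order continuity enters.

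The key step is the following: fix $\varepsilon=\varepsilon_0/2$. I would build $(f_n)$, $(A_n)$ and an increasing sequence of indices by induction. Having used indices up to $m_{n-1}$, set $C:=B_{m_{n-1}+1}$ (so $C$ is disjoint from a set I will control from the previous steps — more precisely, after the induction I will have the sets $A_1,\dots,A_{n-1}$ contained in $B_1\setminus B_{m_{n-1}+1}$, hence disjoint from $C$). Take $g=g_{m_{n-1}+1}$, so $\norm{g\chi_C}_X>\varepsilon_0=2\varepsilon$. Now invoke order continuity of $g\chi_C$: since $B_m\cap C\downarrow\emptyset$ as $m\to\infty$, we have $\norm{g\chi_{B_m\cap C}}_X\to 0$, so we may pick $m_n>m_{n-1}+1$ large enough that $\norm{g\chi_{B_{m_n}\cap C}}_X<\varepsilon$. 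Setting $A_n:=C\setminus B_{m_n}=B_{m_{n-1}+1}\setminus B_{m_n}$ and $f_n:=g$, the triangle inequality gives
$$
\norm{f_n\chi_{A_n}}_X\geq \norm{g\chi_C}_X-\norm{g\chi_{B_{m_n}\cap C}}_X>2\varepsilon-\varepsilon=\varepsilon.
$$
By construction $A_n\subset B_{m_{n-1}+1}\setminus B_{m_n}$ and for $k<n$ we have $A_k\subset B_{m_{k-1}+1}\setminus B_{m_k}\subset B_1\setminus B_{m_k}\subset B_1\setminus B_{m_{n-1}+1}$ (using $m_k\leq m_{n-1}$ and monotonicity of $B_\cdot$), hence $A_k\cap A_n=\emptyset$. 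Thus $(A_n)$ is a disjoint sequence in $\Omega$, $(f_n)\subset K$, and $\norm{f_n\chi_{A_n}}_X>\varepsilon$ for all $n$, which is the assertion.

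The main obstacle is bookkeeping the disjointness of the $A_n$ while simultaneously keeping the lower norm bound: one has to be careful that the index chosen in step $n$ to make $\norm{g\chi_{B_{m_n}\cap C}}_X$ small does not interfere with the sets already produced. The clean way around this, as above, is to always cut out the tail $B_{m_n}$ from the current block $B_{m_{n-1}+1}$, so that each $A_n$ lives strictly between two consecutive "barriers" $B_{m_{n-1}+1}$ and $B_{m_n}$ of the nested family, which forces disjointness automatically. One subtlety worth a line in the write-up: order continuity of $X$ is used only through order continuity of the fixed element $g\chi_C\in X$, so the hypothesis $X\in(OC)$ is slightly more than needed but is the natural assumption; and the fact that $B_m\cap C\downarrow\emptyset$ together with the characterization of order continuous elements via $\norm{h\chi_{A_n}}_X\to 0$ for $A_n\downarrow\emptyset$ (recalled in the preliminaries, citing \cite[Proposition 3.5, p. 15]{BS88}) is exactly what licenses the choice of $m_n$.
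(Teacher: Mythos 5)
Your proof is correct and follows essentially the same route as the paper's: negate equi-integrability to get a nested sequence $B_m\downarrow\emptyset$ with norms bounded below, then use order continuity of the chosen element to cut each tail off at a sufficiently deep level, producing disjoint "annuli" $B_{m_{n-1}+1}\setminus B_{m_n}$ on which half the original lower bound survives. The only difference is presentational — you spell out the induction and the disjointness bookkeeping that the paper compresses into "we proceed in the same way with $n_2$ in place of $n_1$."
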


\proof
When $K\subset X$ is not $X$-equi-integrable then for some $ \varepsilon>0$ there exist a sequence 
of sets $(B_n)$, $B_n\downarrow\emptyset$, and a sequence $(f_n)\subset K$ such that 
$$
\|f_n\chi_{B_n}\|_X\geq  \varepsilon.
$$
Choose $n_1=1$. Since $f_{n_1}\in X_a=X$ we can find $n_2$ such big that
\[
\norm{f_{n_1}\chi_{B_{n_1}\setminus B_{n_2}}}_X\geq \varepsilon/2.
\]
We proceed in the same way with $n_2$ in place of $n_1$. In consequence, we get a sequence $(n_k)$ 
such that sets $A_k=B_{n_k}\setminus B_{n_{k+1}}$ and functions $f_{n_k}$ satisfy the thesis with 
$\varepsilon:= \varepsilon/2$. 
\endproof

It is known \cite[Proposition 3.6.5.]{MN91} that in any Banach lattice $X$ every $X$-equi-integrable 
set is relatively weakly compact (see \cite{MN91} for notion of $X$-equi-integrable sets in abstract 
lattices). The question is whether the opposite implication also holds, or rather, in which spaces 
the opposite implication also holds. Following \cite{AKS08} we will say that a Banach function 
lattice $X$ satisfies {\it the Dunford--Pettis criterion} when each relatively weakly compact set in $X$ 
is $X$-equi-integrable.

Trivially, none of $L^p$ with  $1<p<\infty$ satisfies the Dunford--Pettis criterion. 
Moreover, it cannot hold also in any Banach function lattice which is not order continuous 
(take just $K=\{f\}$ for any $f\in X\setminus X_a$). In particular, if  $K\subset X$ is 
$X$-equi-integrable then $K\subset X_a$.


\subsection{$1$-disjointly homogeneous spaces and the positive Schur property}\label{2.4}

Let $X$ be a Banach function lattice. We say that $X$ is {\it $1$-disjointly homogeneous space} 
(1-DH for short) if every normalized sequence $(f_n)\subset X$ of disjointly supported functions 
has a subsequence  $(f_{n_k})$ equivalent to the $\ell^1$-basis, i.e. there exists a constant $c>0$ 
such that for every $a=(a_k)\in\ell^1$ 
$$
c\norm{a}_1\leq \norm{\sum\limits_{k=0}^{\infty}a_kf_{n_k}}_X\leq  \norm{a}_1.
$$
Evidently, if we required only that the sequence $(\norm{f_n}_X)$ is just bounded from above and 
from below, we would get the equivalent definition. 


$1$-disjointly homogeneous spaces and, more generally, $p$-disjointly homogeneous spaces 
($1 \leq p < \infty$), have been intensively investigated during the last decade -- see 
\cite{As15, As19b, FHST12, FHSTT14, FHT16}.

Examples of 1-DH Banach lattices are: the Orlicz spaces $L^{\varphi}[0, 1]$ 
when $\varphi \in \Delta_2^{\infty}$ and $\varphi^* \in \Delta_0$ (cf. \cite {Le89}, 
see also \cite{FHST12}) and the Lorentz spaces $\Lambda_w [0, 1]$ with the norms 
$\|f\|_w = \int_0^1 f^*(t) w(t)\, dt$, where $w$ is a positive, nonincreasing function on 
$(0, 1]$, such that $\lim\limits_{t \rightarrow 0^+} w(t) = \infty, w(1) > 0$ and $\int_0^1 w(t)\, dt =1$ 
(cf. \cite[Theorem 5.1]{FJT75}). In particular, for $w_p(t) = \frac{1}{p} t^{1/p-1}, 1 \leq p < \infty$, 
we have $\Lambda_{w_p}[0, 1] = L^{p, 1}[0, 1]$, which are 1-DH spaces.  

Let us also mention that Banach lattices being 1-DH have previously been considered under a different approach. 
Wnuk \cite{Wn89} started to investigate the positive Schur property: a Banach lattice $X$ 
has the {\it positive Schur property} if every weakly null sequence with positive terms is norm 
convergent, see also \cite{Wn89, Wn93}. It follows from, e.g. \cite[Corollary 2.3.5]{MN91} 
that it is suffices to verify this condition for disjoint sequences. Using Rosenthal's $l^1$-theorem, 
Wnuk proved in \cite[Theorem 7]{Wn93} that a Banach lattice $X$ is 1-DH if and only if $X$ 
has the positive Schur property (see also \cite[Proposition 4.9]{FHSTT14} and \cite[Proposition 1.2]{Ra91}). 
Note that the positive Schur property (as well as 1-DH) is not preserved by isomorpisms (see \cite[p. 18]{Wn93}). Surprisingly, in \cite{As19c}, it was proved that in the case of rearrangement invariant
spaces on $[0,1]$ the situation is completely different (see \cite[Theorem 5 and Corollary 2]{As19c}).

\section{Weakly compact sets in Banach function lattices}
As we mentioned in the previous section, each $X$-equi-integrable set is relatively weakly compact 
even in abstract lattices. For subsets of Banach function lattices we can prove more, i.e. that each 
equi-integrable set is also a Banach--Saks set. In the case of Orlicz spaces on $[0,1]$ such 
theorem was proved in \cite{Al94}, while for order continuous rearrangement invariant spaces on 
$[0,\infty)$ it is the statement of \cite[Theorem 4.10]{DSS04}.

Recall, that given a Banach space $X$ and a set $K\subset X$ we call $K$ the {\it Banach--Saks set} 
if for each sequence $(f_n)\subset K$ there exists $f\in X$ and a subsequence $(f_{n_k})$ such that 
every further subsequence $(f_{n_{k_l}})$ has means norm converging to $f$, i.e. 
$(\frac{1}{n}\sum\limits_{l=1}^n f_{n_{k_l}})$ converges to $f$ in norm.
Note that each Banach--Saks set is relatively weakly compact (see, for example, \cite[Proposition 2.3]{LRT14}). 

\begin{theorem}
Let $X$ be a Banach function lattice with the Fatou property such that 
$L^1 \cap L^{\infty} \subset X$. If $K\subset X$ is a bounded and $X$-equi-integrable set, 
then $K$ is a Banach--Saks set in $X$.
\end{theorem}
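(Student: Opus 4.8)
The plan is to reduce the Banach--Saks property of an $X$-equi-integrable set to the classical Banach--Saks theorem for $L^1$ together with a truncation argument controlled by equi-integrability. First I would fix a sequence $(f_n)\subset K$. Since $K$ is $X$-equi-integrable, it lies in $X_a$ (as noted after the Dunford--Pettis criterion is introduced), and moreover $K$ is relatively weakly compact, hence bounded in $X$; because $L^1\cap L^\infty\subset X$, a closed-graph argument gives $X\hookrightarrow L^1+L^\infty$, so on sets of finite measure $(f_n)$ is bounded in $L^1$. On a $\sigma$-finite space I would exhaust $\Omega$ by an increasing sequence of finite-measure sets $\Omega_j\uparrow\Omega$; combining $X$-equi-integrability (applied to the tails $\Omega\setminus\Omega_j$) with a diagonal extraction, I may pass to a subsequence, still called $(f_n)$, that converges weakly in $X$ to some $f$, converges weakly in $L^1(\Omega_j)$ for every $j$, and whose tails $\|f_n\chi_{\Omega\setminus\Omega_j}\|_X$ are uniformly small.

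Next I would run the standard Banach--Saks selection: on each $\Omega_j$ the sequence is relatively weakly compact in $L^1(\Omega_j)$, so by the $L^1$ Banach--Saks theorem (Szlenk) there is a subsequence whose Cesàro means converge in $L^1(\Omega_j)$; diagonalizing over $j$ I obtain one subsequence $(f_{n_k})$ such that every further subsequence has Cesàro means that are Cauchy in $L^1(\Omega_j)$ for each $j$, with limit the restriction of $f$. The point of passing to "every further subsequence" is exactly the definition of a Banach--Saks set, and it is preserved because the $L^1$ Banach--Saks theorem can be iterated. It remains to upgrade $L^1$-convergence of the means on each $\Omega_j$ to norm convergence in $X$. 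Here I would split $\frac1n\sum_{l\le n} f_{n_{k_l}} - f = g_n\chi_{\Omega_j} + g_n\chi_{\Omega\setminus\Omega_j}$ where $g_n$ denotes the difference. The tail term is handled by $X$-equi-integrability: both $f$ and each mean lie in the convex, $X$-equi-integrable hull of $K\cup\{f\}$ (a convex hull of an $X$-equi-integrable set together with a weak limit is again $X$-equi-integrable, by Mazur and order continuity), so $\|g_n\chi_{\Omega\setminus\Omega_j}\|_X<\varepsilon$ uniformly once $j$ is large. For the term on $\Omega_j$ I would use a further truncation in height: write $g_n\chi_{\Omega_j} = g_n\chi_{\Omega_j}\chi_{\{|g_n|\le M\}} + g_n\chi_{\Omega_j}\chi_{\{|g_n|>M\}}$; the second piece is small in $X$-norm by equi-integrability of the convex hull (Chebyshev bounds the measure of $\{|g_n|>M\}$ uniformly since the means are bounded in $L^1(\Omega_j)$), while the first piece is bounded by $M\chi_{\Omega_j}$ in $L^\infty$ and tends to $0$ in $L^1(\Omega_j)$, so an interpolation-type estimate $\|h\|_X\le C\big(\|h\|_{L^\infty(\Omega_j)}\big)^{1/2}\big(\|h\|_{L^1(\Omega_j)}\big)^{1/2}$-style bound (more simply: $h$ small in $L^1$ and bounded in $L^\infty$ forces, via Chebyshev again, $h$ to be small outside a set of small measure and bounded on it, hence small in $X$ by equi-integrability plus $L^1\cap L^\infty\subset X$) yields norm smallness in $X$.

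The main obstacle I anticipate is precisely this last upgrade step: turning $L^1(\Omega_j)$-convergence of the Cesàro means into $X$-norm convergence, uniformly enough that the diagonalization over $j$ and the tail estimates combine to give a genuine Cauchy sequence in $X$. The subtlety is that $X$ need not be rearrangement invariant and the embedding constants of $L^1\cap L^\infty\subset X$ on $\Omega_j$ are not controlled a priori as $j\to\infty$; this is exactly where the Fatou property and the hypothesis $L^1\cap L^\infty\subset X$ must be used together with the closed graph theorem to get a uniform embedding $X\hookrightarrow L^1+L^\infty$ and, dually, to localize. I expect the clean way to organize the height-truncation estimate is to invoke the characterization of $X_a$ via $\|h\chi_{A_n}\|_X\to 0$ for $A_n\downarrow\emptyset$ uniformly over the $X$-equi-integrable convex hull, so that "small $L^1$-norm plus bounded $L^\infty$-norm on $\Omega_j$" $\Rightarrow$ "supported, up to $X$-small error, on a set of arbitrarily small measure, with uniformly bounded height" $\Rightarrow$ "$X$-small". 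Everything else is routine diagonalization and $\varepsilon/3$ bookkeeping.
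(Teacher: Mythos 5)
Your strategy is genuinely different from the paper's (which applies the Day--Lennard Koml\'os-type theorem to obtain a.e.\ convergence of the Ces\`aro means of every further subsequence, and then finishes with Egorov plus equi-integrability), but it has a gap at its foundation: the hypothesis $L^1\cap L^\infty\subset X$ does \emph{not} imply $X\hookrightarrow L^1+L^\infty$, nor even $X\hookrightarrow L^1(\Omega_j)$ on sets of finite measure. The closed graph theorem gives continuity of the inclusion $L^1\cap L^\infty\hookrightarrow X$, hence $X'\hookrightarrow L^1+L^\infty$; to get $X\hookrightarrow L^1+L^\infty$ you would need $L^1\cap L^\infty\subset X'$, which is automatic for rearrangement invariant spaces but fails for general Banach function lattices. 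Concretely, $X=L^2(t\,dt)$ on $[0,1]$ has the Fatou property, is order continuous and contains $L^1\cap L^\infty=L^\infty$, yet $f(t)=\bigl(t\ln(e/t)\bigr)^{-1}$ belongs to $X\setminus L^1$; the singleton $\{f\}$ is then a bounded $X$-equi-integrable set that is not bounded in $L^1[0,1]$. For such spaces your sequence need not be bounded, let alone relatively weakly compact, in any $L^1(\Omega_j)$, so Szlenk's theorem cannot be applied and the Chebyshev estimates in your truncation step (which rely on $\|g_n\|_{L^1(\Omega_j)}\to 0$) are unavailable. This is exactly the obstacle the paper's route avoids: Day--Lennard produces the a.e.\ convergence directly from the Fatou property and $L^1\cap L^\infty\subset X$, after which Egorov on a finite-measure set together with equi-integrability (your tail and small-measure estimates, which are fine) closes the argument without any local $L^1$ control.

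A secondary, repairable point: to obtain a single subsequence such that \emph{every} further subsequence has norm-convergent Ces\`aro means, merely ``iterating'' the $L^1$ Banach--Saks theorem is not enough; you need the hereditary form of Szlenk's selection or the Erd\H{o}s--Magidor theorem. Your equi-integrability bookkeeping (convex hulls of $X$-equi-integrable sets are $X$-equi-integrable, the weak limit inherits the bound via Mazur, and $\chi_{\Omega_j}\in X$ controls the low-level part) is correct, but it cannot compensate for the missing local $L^1$-boundedness.
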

\begin{proof}
Let $K\subset X$ be an $X$-equi-integrable set and let $(f_n)\subset K$. By the Day--Lennard 
theorem (cf. \cite[Theorem 3.1]{DL10}) there exists a function $f\in X$ and a subsequence $(f_{n_k})$ 
such that for each further subsequence $(f_{n_{k_j}})$
$$
\frac{1}{n}\sum\limits_{j=1}^n f_{n_{k_j}}(t)\rightarrow f(t)\text{ for a.e. } t\in \Omega,
$$
as $n\rightarrow\infty$. Firstly, we will show that $f\in X_a$. Let $h\in X'$ with $\norm{h}_{X'}\leq 1$ 
and $A\subset \Omega$ be a measurable set. We have
\begin{align*}
\abs{\int_{ \Omega} h\chi_A fd\mu}&\leq\int_{ \Omega}\abs{h\chi_A f}d\mu
\leq \liminf_{n \rightarrow \infty}\int_{ \Omega}\abs{h\chi_A \frac{1}{n}\sum\limits_{k=1}^n f_{n_k}}d\mu\\
&\leq\sup\limits_{n \in \mathbb N} \frac{1}{n}\sum\limits_{k=1}^n\int\abs{h\chi_A f_{n_k}}d\mu
\leq\sup\limits_{n \in \mathbb N} \frac{1}{n}\sum\limits_{k=1}^n\norm{f_{n_k}\chi_A}_X\norm{h}_{X'}\\
&\leq \sup\limits_{n \in \mathbb N} \frac{1}{n}\sum\limits_{k=1}^n\sup\limits_{g\in K}\norm{g\chi_{A}}_X 
=\sup\limits_{g\in K}\norm{g\chi_{A}}_X.
\end{align*}
Thus, by the Fatou property of $X$,  $\norm{f\chi_A}_X\leq\sup\limits_{g\in K}\norm{g\chi_{A}}_X$ 
for arbitrary $A\subset \Omega$.  Since $K$ is $X$-equi-integrable set, we conclude that $f\in X_a$. 

Let $(f_{n_{k_i}})$ be a subsequence of $(f_{n_k})$. We will show that means of $(f_{n_{k_i}})$ are norm 
convergent to $f$. Denote 
$$
g_n=\frac{1}{n}\sum\limits_{i=1}^n f_{n_{k_i}}.
$$
Let $\varepsilon>0$ be arbitrary. Consider the sequence $B_n=\bigcup_{k>n}\Omega_k\downarrow\emptyset$ 
with $n \to \infty$, where the sequence $(\Omega_n)$ comes from the definition of $\sigma$-finite measure 
space. By $X$-equi-integrability of $K$ and order continuity of $f$ there is $i\in\mathbb{N}$ such that 
 $$
 \norm{f\chi_{B_i}}_X\leq \varepsilon {\rm\ and \ }\sup\limits_{g\in K}\norm{g\chi_{B_i}}_X\leq \varepsilon.
 $$
Denote $B:=B_i$ and notice that for $B'=\Omega \setminus B$ there holds $\mu(B')<\infty$, by definition of $B_n$ sets.
Moreover, by Lemma \ref{ei-rownowazne}, there is $\delta>0$ such that  
$$
\norm{f\chi_{A}}_X\leq \varepsilon {\rm\ and \ } \sup\limits_{g\in K}\norm{g\chi_{A}}_X\leq \varepsilon,
$$
whenever $\mu(A)<\delta$ and $A\subset B'$. In consequence, also 
$$
 \norm{g_n\chi_{A}}_X\leq \varepsilon {\rm\ and \ }  \norm{g_n\chi_{B}}_X\leq \varepsilon,
$$
for each $n \in \mathbb N$ and each $A$ like above. 
On the other hand, by the Egorov theorem there exists $C\subset B'$ such that $\mu(B'\setminus C)\leq\delta$ 
and $g_n$ converges uniformly to $f$ on $C$. Let $N\in\mathbb{N}$ be such 
 $$
 \norm{(f-g_n)\chi_{C}}_{\infty}\leq\frac{\varepsilon}{\norm{\chi_{C}}_X}
 $$  
 for $n\geq N$. Therefore, 
\begin{align*}
\norm{f-g_n}_X&\leq \norm{(f-g_n)\chi_{C}}_X+\norm{(f-g_n)\chi_{B'\setminus C}}_X+\norm{(f-g_n)\chi_B}_X\\
&\leq\frac{\varepsilon}{\norm{\chi_{C}}_X}\norm{\chi_{C}}_{X}+\norm{f\chi_{B'\setminus C}}_X+
\norm{g_n\chi_{B'\setminus C}}_X+\norm{f\chi_B}_X+\norm{g_n\chi_B}_X\\
&\leq 5 \varepsilon 
\end{align*}
for $n\geq N$, which finishes the proof.
\end{proof}

While the Dunford--Pettis description of relatively weakly compact sets in terms of equi-integrability 
attracted considerable attention and the notion of equi-integrable sets found its conterparts in general 
function lattices, the alternative description by de la Vall\'ee Poussin seems to be much less popular. On the 
other hand, some counterparts of de la Vall\'ee Poussin theorem appear in the subject of strong embeddings 
of rearrangement invariant spaces (see, for example, \cite[Lemma 4]{As14} and references therein).
The following theorem shows that de la Vall\'ee Poussin characterization works in all  rearrangement invariant 
spaces on $[0,1]$. 

\begin{theorem}
Let $X$ be a rearrangement invariant space on $I=[0,1]$ such that $X_a\neq\{0\}$ and 
let a set $K\subset X$ be bounded. The following statements are equivalent:
 \begin{enumerate}
 \item $K$ is $X$-equi-integrable.
 \item There holds 
$$
\lim\limits_{\gamma \rightarrow \infty} \sup\limits_{f\in K}\norm{f\chi_{\{\abs{f} > \gamma\}}}_X=0.
$$
\item There exists an increasing convex function $\varphi \colon [0, \infty) \rightarrow [0, \infty)$ 
with $\varphi(0) = 0$ satisfying $\lim\limits_{u\rightarrow\infty}\frac{\varphi(u)}{u}=\infty$
such that $ \sup\limits_{f\in K}\norm{\varphi(f)}_X<\infty$.
\end{enumerate}  
\end{theorem}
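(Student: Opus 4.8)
The plan is to prove the cycle $(3)\Rightarrow(2)\Rightarrow(1)\Rightarrow(3)$. The implication $(3)\Rightarrow(2)$ is the easy direction: given the convex function $\varphi$ with $\varphi(u)/u\to\infty$, on the set $\{|f|>\gamma\}$ we have the pointwise estimate $|f|\le \frac{\varphi(|f|)}{\varphi(\gamma)/\gamma}$, so by the lattice property $\norm{f\chi_{\{|f|>\gamma\}}}_X\le \frac{\gamma}{\varphi(\gamma)}\norm{\varphi(f)}_X\le \frac{\gamma}{\varphi(\gamma)}\sup_{g\in K}\norm{\varphi(g)}_X$, and the right-hand side tends to $0$ as $\gamma\to\infty$. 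For $(2)\Rightarrow(1)$ I would argue that on $[0,1]$ the hypothesis in $(2)$ splits the mass of each $f$ into a uniformly "small-in-measure" tail and a uniformly bounded part: fix $\varepsilon>0$, choose $\gamma$ with $\sup_{f\in K}\norm{f\chi_{\{|f|>\gamma\}}}_X<\varepsilon$; then for $A_n\downarrow\emptyset$ we have $\norm{f\chi_{A_n}}_X\le \norm{f\chi_{\{|f|>\gamma\}}}_X+\norm{f\chi_{A_n\cap\{|f|\le\gamma\}}}_X\le\varepsilon+\gamma\norm{\chi_{A_n}}_X$, and since $X_a\neq\{0\}$ on a rearrangement invariant space on $[0,1]$ forces $\chi_{[0,1]}\in X_a$ (the fundamental function is order continuous, hence all characteristic functions are), $\norm{\chi_{A_n}}_X\to 0$. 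This gives $\limsup_n\sup_{f\in K}\norm{f\chi_{A_n}}_X\le\varepsilon$, and $\varepsilon$ was arbitrary.

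The substantive direction is $(1)\Rightarrow(3)$, the de la Vall\'ee Poussin construction. I would proceed as in the classical $L^1$ argument. Using $X$-equi-integrability, build a strictly increasing sequence of levels $\gamma_k\uparrow\infty$ (say $\gamma_0=0$) such that $\sup_{f\in K}\norm{f\chi_{\{|f|>\gamma_k\}}}_X\le 2^{-k}$; this is possible because, by $(1)\Rightarrow(2)$ (which follows from the chain already established, or can be obtained directly since $\{|f|>\gamma\}$ has measure tending to $0$ uniformly over $K$ by boundedness plus equi-integrability), the quantity $\sup_{f\in K}\norm{f\chi_{\{|f|>\gamma\}}}_X$ tends to $0$. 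Then define $\varphi$ by prescribing its right derivative $\varphi'$ to be the step function equal to $k$ on $[\gamma_{k-1},\gamma_k)$ (after possibly passing to a subsequence of the $\gamma_k$ so that they are spread out enough), and set $\varphi(u)=\int_0^u\varphi'(s)\,ds$. This $\varphi$ is increasing, convex, vanishes at $0$, and $\varphi(u)/u\to\infty$ since $\varphi'(u)\to\infty$. The point is to bound $\norm{\varphi(f)}_X$ uniformly: writing $\varphi(|f|)\le \sum_{k\ge 1} k\,(\gamma_k-\gamma_{k-1})\chi_{\{|f|>\gamma_{k-1}\}}$ — or more efficiently $\varphi(|f|)\chi_{\{\gamma_{j-1}<|f|\le\gamma_j\}}\le \varphi(\gamma_j)\chi_{\{|f|>\gamma_{j-1}\}}$ — one telescopes and uses a summation-by-parts identity to dominate $\varphi(|f|)$ pointwise by $\sum_{k\ge 1}|f|\chi_{\{|f|>\gamma_{k-1}\}}$ (each point $x$ with $|f(x)|\in(\gamma_{j-1},\gamma_j]$ is counted in the $j$ terms $k=1,\dots,j$, contributing roughly $j|f(x)|\approx\varphi(|f(x)|)$ provided the $\gamma_k$ grow so that $\varphi(\gamma_j)\le C j\gamma_{j-1}$, which one can arrange). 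Applying the triangle inequality in $X$ and the level estimates gives $\norm{\varphi(f)}_X\le \norm{f\chi_{\{|f|>\gamma_0\}}}_X+\sum_{k\ge 2}\norm{f\chi_{\{|f|>\gamma_{k-1}\}}}_X\le \sup_{g\in K}\norm{g}_X+\sum_{k\ge 2}2^{-(k-1)}<\infty$, uniformly over $f\in K$.

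The main obstacle is making the pointwise domination $\varphi(|f|)\lesssim \sum_{k}|f|\chi_{\{|f|>\gamma_{k-1}\}}$ precise while keeping $\varphi$ genuinely convex with $\varphi(u)/u\to\infty$: one must interleave the choice of slopes of $\varphi$ with the choice of the levels $\gamma_k$ so that the "number of times a level is crossed" (which produces the superlinear growth) is compatible with the geometric decay $2^{-k}$ of the tail norms. The standard trick — choose $\gamma_k$ first so that tails decay like $2^{-k}$, then on $[\gamma_{k-1},\gamma_k)$ let $\varphi$ have slope $k$, and check $\varphi(\gamma_k)=\sum_{j\le k} j(\gamma_j-\gamma_{j-1})\le k\gamma_k$ so that $\sum_k \varphi(\gamma_k)\chi_{\{|f|>\gamma_{k-1}\}}$–type bounds reduce to $\sum_k k\cdot 2^{-(k-1)}<\infty$ after absorbing $\gamma_k$ into $|f|$ on $\{|f|>\gamma_{k-1}\}$ — works verbatim here because the only structural facts used are the lattice property and the triangle inequality, neither of which is special to $L^1$; rearrangement invariance is used only lightly (through $X_a\neq\{0\}\Rightarrow\chi_{[0,1]}\in X_a$) and only in the $(2)\Rightarrow(1)$ step. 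I would present the details of the level-selection and the convexity bookkeeping carefully, as that is where an incautious argument can fail.
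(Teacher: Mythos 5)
Your proposal is correct and follows essentially the same route as the paper: $(3)\Rightarrow(2)$ and $(2)\Rightarrow(1)$ are argued just as there (the $L^1$-embedding plus Chebyshev for the level sets, respectively $\chi_{[0,1]}\in X_a$ and the split at level $\gamma$), and your $\varphi$ with slope $k$ on $[\gamma_{k-1},\gamma_k)$ is exactly the paper's $\varphi(u)=\sum_{k}(u-\gamma_{k-1})_+$. The ``convexity bookkeeping'' you flag as the main obstacle is in fact immediate: the pointwise bound $(u-\gamma)_+\le u\,\chi_{\{u>\gamma\}}$ gives $\varphi(|f|)\le\sum_{k}|f|\chi_{\{|f|>\gamma_{k-1}\}}$ at once, after which the triangle inequality and the summable tail decay finish the proof, with no need for the auxiliary condition $\varphi(\gamma_j)\le Cj\gamma_{j-1}$ or for doubling of the levels.
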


\begin{proof}
$(i)\Rightarrow(ii)$. Let $\varepsilon>0$. Choose $\delta>0$ such that  
$$
\sup\limits_{f\in K}\norm{f\chi_{A}}_X\leq\varepsilon
$$
when $m(A)\leq\delta$. Since $X$ is rearrangement invariant space, thus $X\xhookrightarrow{C}L^1$ 
for some constant $C\geq 1$ (cf. \cite{BS88} or \cite{KPS82}). By the Chebyshev inequality
$$
m(\{\abs{f}> \gamma\}) \leq \frac{\norm{f}_1}{\gamma} \leq C\frac{\norm{f}_X}{\gamma}\leq\delta
$$
for each $f\in K$ and $\gamma $ large enough. Consequently 
$$
\lim\limits_{\gamma \rightarrow\infty}\sup\limits_{f\in K}\norm{f\chi_{\{\abs{f}>\gamma \}}}_X=0.
$$ 
$(ii)\Rightarrow (i)$. 
 Let $\varepsilon>0$. By the assumption there exists $\gamma_0$ such that
 $$
 \sup\limits_{f\in K}\norm{f\chi_{\{\abs{f}>\gamma_0\}}}_X\leq \varepsilon.
 $$
 Let $(A_n)$ be a sequence of measurable sets such that $A_n\downarrow\emptyset$. Since 
 $X_a\neq\{0\}$, we can choose $N\in\mathbb{N}$ such that 
 $$
 f_X(m(A_n))<\frac{\varepsilon}{\gamma_0}
 $$
for $n\geq N$. In consequence,
\begin{align*}
\norm{f\chi_{A_n}}_X&\leq \norm{f\chi_{\{\abs{f}<\gamma_0\}\cap A_n}}_X+\norm{f\chi_{\{\abs{f}>\gamma_0\}\cap A_n}}_X\\
&\leq \gamma_0\norm{\chi_{A_n}}_E+\varepsilon =2 \varepsilon,
\end{align*}
for $n\geq N$.
It means that $K$ is $X$-equi-integrable set. 
\vspace{2mm}

$(iii)\Rightarrow (ii)$. Let $\varepsilon>0$. By the assumption there exists $u_{\varepsilon}>0$ such that 
$$
u \leq \varepsilon \, \varphi(u)
$$
for all $u\geq u_{\varepsilon}$.
Denote $M:=\sup\limits_{f\in K}\norm{\varphi(f)}_X$. We have for $f\in K$
$$
\| f\chi_{\{\abs{f}>u_{\varepsilon}\}}\|_X \leq \varepsilon \, \| \varphi(\abs{f}) 
\chi_{\{\abs{f}>u_{\varepsilon}\}}\|_X \leq \varepsilon \sup\limits_{f\in K} \| \varphi(f)\|_X \leq M \varepsilon.
$$

$(ii)\Rightarrow (iii)$. Let $u_1=0$. For each $n\geq 2$ we can choose $u_n>0$ in such a way that
$$
\sup\limits_{f\in K}\norm{f\chi_{\{\abs{f}>u_n\}}}_X\leq \frac{1}{n^2}.
$$
and $u_{n+1}\geq 2u_n$ for each $n\geq 2$.
For $u\geq 0$ define a function 
$$
\varphi(u)=\sum\limits_{n=1}^\infty (u-u_n)_+.
$$
Clearly, $\varphi$ is an increasing convex function with $\varphi (0) = 0$. Let us explain that  also 
$\lim\limits_{u\rightarrow\infty}\frac{\varphi(u)}{u}=\infty$.  
In fact, for $u\in [u_n,u_{n+1})$ there holds 
\[
\varphi(u)=\sum_{k=1}^n (u-u_k)_+=nu-\sum_{k=1}^n u_k\geq nu-2 u_n,
\]
which means that $\frac{\varphi(u)}{u}\geq n-2\frac{u_n}{u}\geq n-2$ and proves the claim. 
It remains to see that $\sup\limits_{f\in K}\norm{\varphi(f)}_X<\infty$.
For $f\in K$ we have $\varphi(|f|)\leq \sum\limits_{n=1}^\infty |f | \chi_{\{\abs{f}>u_n\}}$,
thus
$$
\norm{\varphi(|f|)}_X\leq\sum\limits_{n=1}^\infty\norm{f\chi_{\{\abs{f}>u_n\}}}_X\leq \frac{\pi^2}{6},
$$
and the proof is finished.
\end{proof}

\begin{remark}
Observe that one cannot get similar characterization of $X$-equi-integrable sets in rearrangement 
invariant spaces on $I=[0,\infty)$. Indeed, if $\varphi>0$ is an arbitrary positive function defined 
on $[0,\infty)$, then the set
$$
K=\{\chi_{[n,n+1)} \colon n\in\mathbb{N}\}\subset X
$$ 
is not $X$-equi-integrable, while we have  
$$
\sup\limits_{f\in K}\norm{\varphi(f)}_X=\varphi(1)\norm{\chi_{[0,1)}}_X<\infty.
$$
It means that de la Vall\'ee Poussin condition cannot imply equi-integrability in the case of infinite 
measure space. 
\end{remark}

To prove our main result we will need the following Rosenthal's type lemma (cf. \cite[p. 82]{Di84}) 
proved by Alexopoulos in \cite[Lemma 2.7]{Al94}. 

\begin{lemma} \label{alexorosen} 
Let $X$ be a Banach space, $(x_n)\subset X$ be a weakly null sequence and $(x^*_n)\subset X^*$ 
be a weakly* null sequence. For every $\varepsilon>0$ there exists an increasing sequence 
$(k_i)\subset \mathbb{N}$ such that 
$$
\sum\limits_{j\neq i}\abs{\fun{x_{n_i},x^*_{n_j}}}<\varepsilon ~ {\it for ~ each} ~ i\in\mathbb{N}.
$$
\end{lemma}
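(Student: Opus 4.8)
The plan is to run the classical "gliding hump" / iterated extraction argument used for Rosenthal's lemma, exploiting that a weakly null sequence and a weakly* null sequence individually force all the "diagonal-neighbour" pairings $\fun{x_{n_i},x^*_{n_j}}$ to be small once the indices are far apart. First I would fix $\varepsilon>0$ and a summable sequence of positive reals $(\varepsilon_i)$ with $\sum_i \varepsilon_i \le \varepsilon/2$ (say $\varepsilon_i = \varepsilon/2^{i+1}$). I will build the increasing sequence $(k_i)$ inductively so that at stage $i$ the new index $k_i$ is chosen large enough to make two finite sums small simultaneously: the "backward" sum $\sum_{j<i}\abs{\fun{x_{k_i},x^*_{k_j}}}$, which is controlled because $x_{k_i}$ acts on the \emph{already fixed finitely many} functionals $x^*_{k_1},\dots,x^*_{k_{i-1}}$ and $(x_n)$ is weakly null, so $\fun{x_n,x^*_{k_j}}\to 0$ in $n$ for each fixed $j$; and the "forward" control, which I arrange by instead recording, at stage $i$, that the already-chosen functional $x^*_{k_i}$ will be evaluated only on later vectors $x_{k_j}$, $j>i$, and since $(x^*_n)$ is weak* null, $\fun{x_n,x^*_{k_i}}\to 0$ in $n$.

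Concretely, suppose $k_1<\dots<k_{i-1}$ have been chosen. Using that $\lim_{n}\fun{x_n,x^*_{k_j}} = 0$ for each $j=1,\dots,i-1$ and $\lim_n \fun{x_n, x^*_{k_j}}=0$ as well for the functionals, I pick $k_i > k_{i-1}$ so large that
$$
\sum_{j=1}^{i-1}\abs{\fun{x_{k_i},x^*_{k_j}}} < \varepsilon_i
\qquad\text{and}\qquad
\sum_{j=1}^{i-1}\abs{\fun{x_{k_j},x^*_{k_i}}} < \varepsilon_i .
$$
Both conditions are achievable since each is a finite sum of terms tending to $0$ as the new index grows; pick $k_i$ beyond the finitely many thresholds coming from the $2(i-1)$ limit statements. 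This is the heart of the construction, and the only mildly delicate point is to realize that the "forward neighbours" of a fixed $i$ get their contribution $\abs{\fun{x_{k_j},x^*_{k_i}}}$ bounded at the \emph{later} stage $j$ rather than at stage $i$; so one must be slightly careful about which inequality is used to bound which half of the final double sum.

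With $(k_i)$ so constructed, fix $i\in\mathbb{N}$ and split
$$
\sum_{j\neq i}\abs{\fun{x_{k_i},x^*_{k_j}}}
= \sum_{j<i}\abs{\fun{x_{k_i},x^*_{k_j}}} + \sum_{j>i}\abs{\fun{x_{k_i},x^*_{k_j}}} .
$$
The first sum is $<\varepsilon_i\le\varepsilon/2$ directly from the left-hand condition at stage $i$. For the second sum, for each fixed $j>i$ the term $\abs{\fun{x_{k_i},x^*_{k_j}}}$ appears (as one of the $j-1$ summands with index $i<j$) in the \emph{right-hand} inequality imposed at stage $j$; hence $\sum_{j>i}\abs{\fun{x_{k_i},x^*_{k_j}}} \le \sum_{j>i}\varepsilon_j < \varepsilon/2$. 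Adding, $\sum_{j\neq i}\abs{\fun{x_{k_i},x^*_{k_j}}} < \varepsilon$, as required. The main obstacle, such as it is, is purely bookkeeping: arranging the induction so that \emph{both} the pairings of a late vector against early functionals and of early vectors against a late functional are simultaneously tamed; once the two limit statements $\fun{x_n,x^*_{k_j}}\to 0$ (weak nullity of $(x_n)$) and $\fun{x_n,x^*_{k_j}}\to 0$ in the functional index via weak* nullity are invoked in the right order, the estimates are immediate. Note that the statement uses $n_i$ in the conclusion as a typographical shorthand for $k_i$; to match the paper we simply write $k_i$ throughout. $\hfill\square$
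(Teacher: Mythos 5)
Your argument is correct: the inductive choice of $k_i$ beyond the finitely many thresholds coming from $\fun{x_n,x^*_{k_j}}\to 0$ (weak nullness, $x^*_{k_j}$ fixed) and $\fun{x_{k_j},x^*_n}\to 0$ (weak* nullness, $x_{k_j}$ fixed) is exactly the standard diagonal extraction, and your bookkeeping --- bounding the backward sum at stage $i$ and each forward term $\abs{\fun{x_{k_i},x^*_{k_j}}}$, $j>i$, by the condition imposed at the later stage $j$ --- closes the estimate. The paper itself does not prove this lemma but cites Alexopoulos \cite[Lemma 2.7]{Al94}, whose proof is this same Rosenthal-type argument; the only blemishes in your write-up are prose-level index slips (e.g.\ ``$\fun{x_n,x^*_{k_i}}\to 0$'' where you mean $\fun{x_{k_i},x^*_n}\to 0$), which the displayed inequalities already state correctly.
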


\begin{theorem}\label{charakteryzacja}
{\it Let $X$ be a Banach function lattice. Then $X$ satisfies the Dunford--Pettis criterion 
if and only if $X$ is $1$-disjointly homogeneous.}
\end{theorem}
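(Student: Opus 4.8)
The plan is to prove the two implications separately, using the characterization of $1$-DH spaces via the positive Schur property (equivalently, via disjoint sequences being equivalent to the $\ell^1$-basis) together with Rosenthal's-type Lemma \ref{alexorosen} and the technical Lemma \ref{zbiory}.

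\medskip

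\textbf{($\Leftarrow$) Suppose $X$ is $1$-DH; we show it satisfies the Dunford--Pettis criterion.} First observe that $X$ must be order continuous: if $f \in X \setminus X_a$, one can (using nonatomicity) build a normalized disjoint sequence supported on shrinking sets, and such a sequence cannot be $\ell^1$-equivalent since its partial sums stay bounded by a fixed element's norm by order continuity arguments applied in reverse --- more simply, $1$-DH spaces are known to be order continuous, and one can cite or quickly argue this. Now let $K \subset X$ be relatively weakly compact but, for contradiction, not $X$-equi-integrable. By Lemma \ref{zbiory} there are $\varepsilon > 0$, functions $(f_n) \subset K$, and pairwise disjoint sets $(A_n)$ with $\norm{f_n \chi_{A_n}}_X > \varepsilon$. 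Passing to a subsequence, weak compactness gives $f_n \rightharpoonup f$ weakly in $X$. Replacing $f_n$ by $f_n - f$ is not quite legitimate (the $A_n$-estimate could be spoiled), so instead one argues: since the $A_n$ are disjoint, $\norm{f \chi_{A_n}}_X \to 0$ (as $f \in X_a$), so for large $n$ we still have $\norm{(f_n - f)\chi_{A_n}}_X > \varepsilon/2$; thus $g_n := (f_n - f)\chi_{A_n}$ is a disjointly supported sequence, norm-bounded below by $\varepsilon/2$ and above by $\sup_n \norm{f_n - f}_X =: M$. By the $1$-DH property, a subsequence of $(g_n)$ is equivalent to the $\ell^1$-basis, hence has no weakly convergent subsequence and is not weakly null. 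On the other hand, I claim $g_n \rightharpoonup 0$ weakly: for $h \in X' = X^*$ (order continuity gives $X^* = X'$), $\abs{\fun{g_n, h}} \le \norm{(f_n-f)\chi_{A_n}}_X \norm{h \chi_{A_n}}_{X'} \le M \norm{h\chi_{A_n}}_{X'} \to 0$ because $h \in X' = (X')_a$ when $X$ is reflexive-like... more carefully, one uses that $(f_n - f)$ is weakly null and the $\chi_{A_n}$ are disjoint together with Lemma \ref{alexorosen} applied to $x_n = g_n$ and $x_n^* = $ norming functionals, to extract a further subsequence along which $g_n \to 0$ weakly. This contradicts $\ell^1$-equivalence, finishing this direction.

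\medskip

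\textbf{($\Rightarrow$) Suppose $X$ satisfies the Dunford--Pettis criterion; we show $X$ is $1$-DH.} Let $(f_n) \subset X$ be a normalized disjointly supported sequence. The singleton-type observation from the preliminaries shows $X$ must be order continuous (a non-order-continuous $X$ fails Dunford--Pettis). We must produce a subsequence equivalent to the $\ell^1$-basis; since the upper $\ell^1$-estimate $\norm{\sum a_k f_{n_k}}_X \le \norm{a}_1$ is automatic (it is $\norm{a}_1$ only after renorming, but in general $\le \sup_k \norm{f_{n_k}}_X \norm{a}_1 = \norm{a}_1$ by disjointness and the lattice property --- actually $\norm{\sum a_k f_{n_k}}_X \le \norm{(\sup_k \abs{a_k})\,\abs{\sum \chi_{\supp f_{n_k}}}}$ is false; the correct bound uses $\abs{\sum a_k f_{n_k}} = \sum \abs{a_k}\abs{f_{n_k}} \le (\sup\abs{a_k}) \sum \abs{f_{n_k}}$, not bounded --- so instead one only needs the \emph{lower} estimate and equivalence, and the precise normalization in the definition is the one to match). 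The key point is the lower estimate: we must find $c > 0$ and a subsequence with $\norm{\sum_k a_k f_{n_k}}_X \ge c \norm{a}_1$. Suppose not; then for every subsequence and every $c > 0$ there are coefficients violating this. A standard gliding-hump / diagonal argument then produces, after passing to a subsequence, convex blocks or normalized sums $g_m = \sum_{k} a_k^{(m)} f_{n_k}$ with $\norm{a^{(m)}}_1 = 1$ but $\norm{g_m}_X \to 0$; but a cleaner route: the set $K = \{ f_{n_k} : k \in \mathbb{N}\}$ --- or rather the set of finite averages --- if $(f_{n_k})$ were \emph{not} $\ell^1$-equivalent, then by Rosenthal's $\ell^1$-theorem (which one may invoke, as Wnuk does) a subsequence is weakly Cauchy, hence (being disjoint, so weakly null coordinatewise) weakly null. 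Then $K' = \{f_{n_k}\}$ is relatively weakly compact, so by the Dunford--Pettis criterion it is $X$-equi-integrable; but $\norm{f_{n_k} \chi_{\supp f_{n_k}}}_X = \norm{f_{n_k}}_X = 1$ with $\supp f_{n_k}$ disjoint, and disjointly supported sets shrinking to $\emptyset$ (take $A_m = \bigcup_{k \ge m} \supp f_{n_k}$, which decreases to a null set) force $\sup_k \norm{f_{n_k}\chi_{A_m}}_X \ge 1$ for all $m$ --- contradicting $X$-equi-integrability. Hence $(f_n)$ has an $\ell^1$-equivalent subsequence, so $X$ is $1$-DH.

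\medskip

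\textbf{Main obstacle.} The delicate point in the ($\Leftarrow$) direction is verifying that the disjointly supported perturbation $g_n = (f_n - f)\chi_{A_n}$ is actually weakly null (not merely that it has small action against each fixed functional), so that the $\ell^1$-equivalence supplied by the $1$-DH hypothesis is genuinely contradicted; this is exactly where Lemma \ref{alexorosen} enters, pairing $(g_n)$ against a bounded sequence of norming functionals and extracting a subsequence along which the off-diagonal pairings are summably small, which combined with the disjoint supports yields weak nullity. In the ($\Rightarrow$) direction, the only subtlety is the appeal to Rosenthal's $\ell^1$-theorem to reduce the non-$\ell^1$-equivalent case to a weakly null subsequence; everything after that is the clean equi-integrability contradiction with $A_m = \bigcup_{k\ge m}\supp f_{n_k} \downarrow \emptyset$.
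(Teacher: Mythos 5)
Your second direction (Dunford--Pettis criterion $\Rightarrow$ 1-DH) is essentially the paper's argument: Rosenthal's $\ell^1$-theorem produces a weakly Cauchy subsequence of the disjoint normalized sequence, disjointness upgrades weakly Cauchy to weakly null (the paper makes this precise by restricting a functional to the union of the even-indexed supports), and then $A_m=\bigcup_{k\ge m}\supp f_{n_k}\downarrow\emptyset$ with $\norm{f_{n_m}\chi_{A_m}}_X=1$ contradicts equi-integrability. That part is fine; the digression about the upper $\ell^1$-estimate is noise, since the triangle inequality already gives $\norm{\sum_k a_k f_{n_k}}_X\le\norm{a}_1$ for normalized terms.

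The gap is in the direction 1-DH $\Rightarrow$ Dunford--Pettis, at exactly the point you flag as delicate: you never actually prove that $g_n=(f_n-f)\chi_{A_n}$ is weakly null. Your first justification needs $\norm{h\chi_{A_n}}_{X'}\to 0$, i.e.\ $h\in (X')_a$; this fails in general, and in particular for $X=L^1$, $X'=L^\infty$ --- the prototypical 1-DH space --- so the H\"older estimate cannot close the argument. Your second justification applies Lemma \ref{alexorosen} with $x_n=g_n$, but that lemma \emph{assumes} the sequence is weakly null, which is precisely what is to be shown; and if instead you take $x_n=f_n-f$ (legitimately weakly null) paired with norming functionals $h_n$ of the $g_n$ supported on $A_n$, the lemma controls the off-diagonal pairings but gives you no single bounded functional against which to test, because $\sum_j h_j$ need not converge or be bounded in $X'$. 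The paper's proof supplies exactly the missing construction: using the $\ell^1$-equivalence of $(w_l)=(f_{n_{k_l}}\chi_{A_{k_l}})$, it builds the summing functional on $[w_l]$, extends it by Hahn--Banach to a single $h\in X'$ with $\fun{w_l,h}=1$, sets $h_l=h\chi_{A_{k_l}}$ (automatically uniformly bounded and weak*-null by order continuity of $X$), and then Lemma \ref{alexorosen} applied to the weakly null $(f_{n_{k_l}}-f)$ and the weak*-null $(h_l)$ yields $\abs{\fun{f_{n_{k_{l_i}}}-f,h}}\ge 1/2$, contradicting weak nullity of $(f_n-f)$ itself rather than of $(g_n)$. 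Alternatively you could close your version by citing the known lattice fact that every disjoint sequence in the solid hull of a relatively weakly compact set is weakly null (Meyer-Nieberg/Aliprantis--Burkinshaw), but as written the step is not established.
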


\begin{proof}
\textit{Sufficiency}. Assume that $X$ is not $1$-disjointly homogeneous, i.e. there exists a normalized 
sequence $(f_n)\subset X$ of disjointly supported function without subsequence equivalent to the $\ell^1$ 
basis. By the Roshental's $\ell^1$-theorem, the sequence $(f_n)$ contains weakly Cauchy subsequence 
$(f_{n_k})$. We will show that $(f_{n_k})$ is weakly null. Suppose for a contradiction, that there exist 
$g\in X'$ and $a\neq 0$ such that
$$
\lim\limits_{k\rightarrow\infty}\int_{\Omega} gf_{n_k} \, d\mu = a.
$$
Denote $A=\bigcup\limits_{k=0}^{\infty}\supp(f_{n_{2k}})$. Observe that evidently $h=g\chi_{A}\in X'$. 
We have 
$$
\lim\limits_{k\rightarrow\infty}\int_{\Omega} hf_{n_{2k}}d\mu=a {\rm \ while\ } \int_{\Omega} hf_{n_{2k+1}}d\mu=0, 
$$
which means that the sequence $(\int_{\Omega} hf_{n_k}d\mu)$ is not convergent. Thus, we arrived 
to the contradiction with $(f_{n_k})$ being weakly Cauchy. Therefore, $(f_{n_k})$ is weakly null and, 
in particular, the set $K=\{f_{n_k}\}$ is weakly compact. The proof of sufficiency will be finished when 
we prove that $K$ is not $X$-equi-integrable set. Let 
$$
A_i=\bigcup\limits_{k=i}^{\infty}\supp(f_{n_k}).
$$
Clearly $A_i\downarrow\emptyset$. However, 
$$
\sup\limits_{f\in K}\norm{f\chi_{A_i}}_X\geq\norm{f_{n_i}\chi_{A_i}}_X=\norm{f_{n_i}}_X= 1.
$$
Thus $X$ does not satisfy the Dunford--Pettis criterion.
\vspace{2mm}

\textit{Necessity}. Let $K\subset X$ be an arbitrary relatively weakly compact set and assume it is not 
$X$-equi-integrable. One can choose $(g_m)\subset K$ which is not $X$-equi-integrable. Since $K$ is 
weakly compact there exists a subsequence $(g_{m_n})$ converging weakly to some $g\in X$. Define 
$$
f_n=g_{m_n} - g.
$$
Then $(f_n)$ is weakly null. Moreover, it is also not $X$-equi-integrable set, since $g\in X_a=X$.
By Lemma \ref{zbiory} there exist  $\varepsilon>0$ and a sequence of disjoint measurable sets $(A_k)$ 
such that for some subsequence $(f_{n_k})$ of $(f_n)$
$$
\norm{f_{n_k}\chi_{A_k}}_X>\varepsilon
$$
for each $n\in\mathbb{N}$. Since $X$ is  $1$-disjointly homogeneous  there exists a further subsequence 
$(f_{n_{k_l}}\chi_{A_{k_l}})$ of $(f_{n_k}\chi_{A_k})$ equivalent to the  $\ell^1$ basis. Denote 
$w_l:=f_{n_{k_l}}\chi_{A_{k_l}}$.
We claim that there is  $h\in X'$ such that 
$$
\fun{\sum\limits_{l=0}^\infty a_lw_l,h}=\sum\limits_{l=0}^\infty a_l
$$
for each $a\in\ell^1$. Indeed, let $U=[w_l]$ be a closed span of $(w_l)$ and $T\colon U\rightarrow\ell^1$ 
be an isomorphism transforming $(w_l)$ into $\ell^1$ basis, i.e. $T(w_l)=e_l$, for $l\in \mathbb{N}$. 
Let $y^*\in \ell^\infty$ be such that $\fun{a,y^*}=\sum\limits_{l=0}^\infty a_l$ for each $a\in\ell^1$. 
Denote $h^*=T^*y^*\in U^*$. By the Hahn-Banach  theorem there exists $h\in X'=X^*$ such that 
$h_{\mid U}=h^*$. Clearly $\fun{\sum\limits_{l=0}^\infty a_lw_l,h}=\sum\limits_{l=0}^\infty a_l$ for each 
$a\in\ell^1$ and the claim follows.

Denote $h_l=h\chi_{A_{k_l}}$ and observe that for each $l\in \mathbb{N}$
$$
\fun{ f_{n_{k_l}},h_l}=\int f_{n_{k_l}}h_ld\mu=\int f_{n_{k_l}}h\chi_{A_{k_l}}d\mu=\fun{ w_l,h}=1.
$$
Moreover,  $(h_l)$ is weak*-null sequence. Indeed, for each $f\in X$ we have
$$
\abs{\int_{\Omega} fh_ld\mu} = \abs{\int_{A_{k_l}} fh_ld\mu} 
\leq \norm{f\chi_{A_{{k_l}}}}_X\norm{h_l}_{X'}
\leq \norm{f\chi_{A_{{k_l}}}}_X\norm{h}_{X'}\rightarrow 0,
$$
since $X$ is order continuous space, as is each 1-DH space.

By Lemma \ref{alexorosen} there exists an increasing sequence $(l_j)\subset \mathbb{N}$ such 
that for $j\in\mathbb{N}$ 
$$
\sum\limits_{j\neq j}\abs{\fun{f_{n_{k_{l_j}}},h_{l_i}}}<\frac{1}{2}.
$$
We have for each $i \in \mathbb N$
\begin{align*}
\abs{\fun{f_{n_{k_{l_i}}},h}}&=\abs{\fun{ f_{n_{k_{l_i}}},\sum\limits_{j=1}^{\infty}h_{l_j}}}\\
&\geq \abs{\fun{ f_{n_{k_{l_i}}},h_{l_i}}}-\sum\limits_{j\neq i}\abs{\fun{f_{n_{k_{l_i}}},h_{l_j}}} \geq 1-\frac{1}{2}=\frac{1}{2}.
\end{align*}
However, $(f_n)$ was weakly null, so we get a contradiction, which finishes the proof.
\end{proof}

\section{$1$-disjointly homogeneous spaces on $[0,\infty)$}
The main known examples of $1$-disjointly homogeneous spaces are Lorentz $L^{p,1}$ spaces 
on $I = [0,1]$ (generated by the norm $\| f\|_X = \frac{1}{p} \int_I x^{1/p-1} f^*(x) \, dx$ for $1 < p < \infty$)
and Orlicz spaces $L^{\varphi}$ spaces on $[0,1]$, when $\varphi \in \Delta_2^{\infty}$ and 
its complementary function $\varphi^*$ satisfies condition (\ref{2}).

It seems however, that much less is known about such spaces on semiaxis. 
The following theorems provide such examples. 


\begin{theorem}
For each $1 < p < \infty$ the space $(L^{p,1}\cap L^1)[0,\infty)$ is $1$-disjointly homogeneous.
\end{theorem}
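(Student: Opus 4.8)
The plan is to take a normalized sequence $(f_n)$ of disjointly supported functions in $Y := (L^{p,1} \cap L^1)[0,\infty)$ and extract a subsequence equivalent to the $\ell^1$-basis. Recall that the norm on $Y$ is $\|f\|_Y = \max\{\|f\|_{p,1}, \|f\|_1\}$ (or the sum — the two choices are equivalent), so for any finite sum of disjoint pieces the upper $\ell^1$-estimate $\|\sum a_k f_{n_k}\|_Y \le \sum |a_k| \, \|f_{n_k}\|_Y = \sum |a_k|$ is automatic from the triangle inequality together with normalization; the whole content is the lower estimate $c \sum |a_k| \le \|\sum a_k f_{n_k}\|_Y$ along a subsequence. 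The key structural fact I would exploit is that $L^{p,1}[0,\infty)$ is \emph{not} $1$-disjointly homogeneous on its own (normalized disjoint indicators of long intervals span $\ell^p$, not $\ell^1$), so the $\ell^1$-behaviour must come from the $L^1$-component; the role of $L^{p,1}$ is only to force the supports to be "small" in measure whenever a function has substantial $L^{p,1}$-norm.

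First I would split the sequence according to where each $f_n$ carries its mass. Fix $\gamma > 0$ and write $f_n = f_n \chi_{\{|f_n| > \gamma\}} + f_n \chi_{\{|f_n| \le \gamma\}} =: f_n' + f_n''$. Since $\|f_n\|_{p,1} \le 1$, the distribution function is controlled: $m(\{|f_n| > \gamma\}) \le (\|f_n\|_{p,1}/\gamma)^{?}$ — more precisely one uses $f_n^{**}(t) \ge f_n^*(t)$ and $\|f_n\|_{p,1} \ge$ const$\cdot t^{1/p} f_n^*(t)$, giving $m(\{|f_n|>\gamma\}) \le (C/\gamma)^p$, a bound \emph{uniform in $n$}. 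Now a dichotomy: either $\liminf_n \|f_n''\|_Y > 0$ (the "flat, spread-out" case) or, passing to a subsequence, $\|f_n''\|_Y \to 0$. In the second case $\|f_n'\|_Y \to 1$ and the functions $f_n'$ live on sets of measure $\le (C/\gamma)^p$; since the $f_n$ are disjoint, one shows the normalized $f_n'$ are equivalent to the $\ell^1$-basis essentially because on a set of small measure the $L^{p,1}$-norm and the $L^1$-norm are comparable (Hölder / the embedding $L^{p,1}(A) \hookrightarrow L^1(A)$ with constant depending on $m(A)$), so $\|f_n'\|_Y$ and $\|f_n'\|_1$ are comparable, and a disjoint sequence in $L^1$ of bounded-below $L^1$-norm is $1$-equivalent to $\ell^1$; the perturbation $f_n''$ is absorbed by a standard small-perturbation (gliding hump / principle of small perturbations) argument. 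In the first case, I would argue that a sequence of disjoint functions with $\|f_n''\|_\infty \le \gamma$ and $\|f_n''\|_Y$ bounded below cannot be normalized in $L^{p,1}[0,\infty)$ without large $L^1$-mass — i.e. $\|f_n''\|_1$ must be bounded below — because bounded, disjoint, spread-out functions have small $L^{p,1}$-norm relative to their $L^1$-norm; hence again the $L^1$-component dominates and gives $\ell^1$.

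The main obstacle I anticipate is making the dichotomy quantitative and uniform: I need a single constant $c$ working for the final subsequence, which means the splitting parameter $\gamma$ cannot be sent to infinity but must be fixed in advance in terms of the desired $c$, and then I must verify that \emph{both} alternatives deliver a lower $\ell^1$-bound depending only on $\gamma$ and the lattice constants of $Y$. Concretely, the delicate estimate is: for $g$ supported on a set of measure $\le \delta$ one has $\|g\|_1 \le \|g\|_Y \le K(\delta)\|g\|_1$ — I would prove $K(\delta) = 1 + $ (something $\to 1$ as $\delta \to 0$) using $\|g\|_{p,1} = \frac1p\int_0^\delta t^{1/p-1} g^*(t)\,dt \le \frac1p \delta^{1/p-1}\cdot\frac{?}{}$; getting the right comparison here, and checking it interacts correctly with the disjoint-support hypothesis across different $f_n$ (whose supports need not all be small, only the parts where $|f_n|$ is large), is where the care is needed. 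Once that comparison is in hand, the reduction to the classical fact "normalized disjoint sequences in $L^1$ span $\ell^1$ isometrically" is routine, and combining it with Rosenthal-type small perturbations closes the argument.
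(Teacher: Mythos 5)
Your upper $\ell^1$-estimate and your treatment of the ``flat'' alternative are fine: if $|f_n|\le\gamma$ and $\|f_n\|_Y$ is bounded below, the H\"older estimate $\|g\|_{p,1}\le \gamma^{1/p'}\|g\|_1^{1/p}$ for functions bounded by $\gamma$ does force $\|f_n\|_1$ to stay bounded below, and then disjointness in $L^1$ gives the lower $\ell^1$-bound; this is exactly the computation the paper uses (there, to prove its claim by contradiction). The genuine gap is in the concentrated alternative. Your key claimed comparison --- that a function $g$ supported on a set of measure at most $\delta$ satisfies $\|g\|_{p,1}\le K(\delta)\|g\|_1$ with $K(\delta)$ finite, let alone tending to $1$ --- is false: for $g=M\chi_{[0,\varepsilon]}$ one has $\|g\|_{p,1}/\|g\|_1=\varepsilon^{1/p-1}\to\infty$ as $\varepsilon\to 0$. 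The embedding on small sets goes the other way ($\|g\|_1\le p\,\delta^{1/p'}\|g\|_{p,1}$), so in the case where $\|f_n\|_{p,1}=1$ but $\|f_n\|_1\to 0$ --- which is precisely the case your dichotomy must handle, since the large-level parts $f_n'$ can carry all of the $Y$-norm while having vanishing $L^1$-mass --- there is no reduction to ``disjoint normalized sequences in $L^1$ span $\ell^1$.'' Your guiding heuristic that the $\ell^1$-behaviour must come from the $L^1$-component is exactly what fails here.

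What is missing is the one nontrivial external input of the paper's proof: the Figiel--Johnson--Tzafriri theorem that $L^{p,1}$ over a finite measure space is itself $1$-disjointly homogeneous. In the case $\|f_n\|_1\to 0$ the paper shows (by the same H\"older computation, run in reverse) that the truncations $f_n\chi_{\{f_n>k\}}$ retain $L^{p,1}$-norm at least $1/2$ for suitable $n=n(k)$, passes to a subsequence whose large-level parts have supports of summable measure, transplants them by equimeasurable rearrangement onto disjoint subsets of $[0,1]$, and only then invokes the $1$-DH property of $L^{p,1}[0,1]$ to obtain the lower $\ell^1$-estimate, which transfers back by rearrangement invariance and the lattice property. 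Without citing (or reproving) that theorem your concentrated case does not close; with it, your level-set decomposition would work, but it would then essentially reproduce the paper's argument.
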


\proof
To simplify the notion let us denote $E=(L^{p,1}\cap L^1)[0,\infty)$. Let $(f_n)\subset (L^{p,1}\cap L^1)[0,\infty)$ 
be a normalized sequence of positive functions with disjoint supports. We need to consider two cases. \\
$1^0.$ Firstly, if there is $\delta>0$ such that $\norm{f_n}_1>\delta$ for infinitely many $n$'s, then 
composing these $n$'s into a sequence $(n_k)$ we have for each $a=(a_k)\in l^1$
$$
\sum_{k=1}^{\infty}|a_k|\geq \norm{\sum_{k=1}^{\infty}a_k f_{n_k}}_E
\geq \norm{\sum_{k=1}^{\infty}a_k f_{n_k}}_1
\geq \delta \sum_{k=1}^{\infty}|a_k|.
$$\\
$2^0.$ Otherwise $\norm{f_n}_1\to 0$ and thus $\norm{f_n}_{p,1}=1$ for almost all $n$'s. We may assume 
that actually $\norm{f_n}_{p,1}=1$ for all $n$'s. We claim that for each $k\in \mathbb{N}$ there is 
$n\in \mathbb{N}$ such that
\begin{equation}\label{star}
\norm{f_n\chi_{\{f_n>k\}}}_{p,1}\geq 1/2.
\end{equation}
Suppose for the moment that we have proved the claim. Then the claim together with 
$\norm{f_n}_{p,1}=1$ imply that there is an increasing sequence $(n_k)$ such that 
$$
m(\{f_{n_k}>k\})\to 0 \ {\rm when \ } k\to\infty {\rm \ and \ }  \norm{f_{n_k}\chi_{\{f_{n_k}>k\}}}_{p,1}\geq 1/2  
{\rm \ for \ each \ } k.
$$
Thus we can select another subsequence $(n_{k_i})$ such that 
$$
m(\{f_{n_{k_i}}>k_i\})<\frac{1}{2^i} {\rm \ for \ each \ } i.
$$
Define a new sequence $(g_i)$ by 
$$
g_i(t)=[f_{n_{k_i}}\chi_{\{f_{n_{k_i}}>{k_i}\}}]^*(t-1/2^i) {\rm \ for \ } t\geq 1/2^i
$$
and $g_i(t)=0$ otherwise. Notice that $g_i$ is equimeasurable with $f_{n_{k_i}}\chi_{\{f_{n_{k_i}}>{k_i}\}}$  
for each $i$. 
Moreover,  $(g_i)$ is disjoint sequence and all supports of $(g_i)$ fit into $[0,1]$, thus we may regard 
$(g_i)$ as a subset of  $L^{p,1}[0,1]$. We have  $1/2\leq \norm{g_i}_{p,1}\leq 1$ for each $i$. 
However, the space $L^{p,1}[0,1]$ is $1$-disjointly homogeneous (cf. \cite{FJT75} and Subsection \ref{2.4}) 
and so there is subsequence of $(g_i)$ which is equivalent to the $l^1$ basis. Without loss of generality 
we may assume that $(g_i)$ itself is equivalent to the $l^1$ basis. Then there is $\eta>0$ such that for 
each $a=(a_i)\in l^1$ we have 
$$
\eta \sum_{i=1}^{\infty}|a_i|\leq \norm{\sum_{k=1}^{\infty}a_ig_i}_{L^{p,1}[0,1]}
\leq \norm{\sum_{k=1}^{\infty}a_if_{n_{k_i}}\chi_{\{f_{n_{k_i}}>{k_i}\}}}_{E}
\leq \norm{\sum_{k=1}^{\infty}a_if_{n_{k_i}}}_{E}\leq \sum_{i=1}^{\infty}|a_i|.
$$
Thus it remains to prove the claim. Suppose (\ref{star}) does not hold. This means there is $k_0$ 
such that for each $n$ 
\begin{equation}\label{niestar}
\norm{f_n\chi_{\{f_n>k_0\}}}_{p,1}< 1/2,
\end{equation}
which implies 
\begin{equation}
\norm{f_n\chi_{\{f_n\leq k_0\}}}_{p,1}\geq 1/2,
\end{equation}
for each $n$. Then
$$
1/2 \leq \norm{f_n\chi_{\{f_n\leq k_0\}}}_{p,1}\leq \int_0^{k_0}d_{f_n}^{1/p}(t)dt
\leq \left(\int_0^{k_0}d_{f_n}(t)dt\right)^{1/p}k_0^{1/p'}.
$$
This is
$$
\norm{f_n}_1\geq \frac{1}{2^pk_0^{p/p'}},
$$
which contradicts $\norm{f_n}_1\to 0$ and the claim is proved. 
\endproof

Astashkin, Kalton and Sukochev \cite{AKS08} proved that order continuous Orlicz space $L^{\varphi}[0,1]$ 
satisfies the Dunford--Pettis criterion if and only if complementary function $\varphi*$ satisfies $\Delta_0$ 
condition, that is,
\begin{equation} \label{4.4}
\lim_{u \rightarrow \infty} \frac{\varphi^*(\lambda u)}{\varphi^*(u)} = \infty ~{\rm for ~ some} ~\lambda > 1.
\end{equation}

In case of Orlicz spaces on $I=[0,\infty)$ it is in order to ask if the analogous theorem may be proved 
with (\ref{2}) condition on $\varphi^*$ extended to small arguments. This is, if, analogously as for the 
finite measure space, the condition
$$
\lim\limits_{t\rightarrow 0}\frac{\varphi^*(\lambda t)}{\varphi^*(t)}=\infty {\rm \ for \ some \ }0<\lambda<1
$$
together with (\ref{4.4}) is sufficient or necessary for $L^{\varphi}[0,\infty)$ to satisfy the Dunford--Pettis 
criterion. However, it appears, that this is not the case and we have the following characterization of 
Orlicz spaces on $[0,\infty)$ satisfying the Dunford--Pettis criterion. Notice that already another 
characterization of 1-DH Orlicz spaces on $I=[0,\infty)$ was given in \cite[Theorem 5.1]{FHSTT14}. 
It was, however, formulated in terms of $C_{\varphi}$ set, which is much more difficult to understand 
than simply saying that a respective function satisfies the condition $\Delta_0$. Moreover, the proof 
of \cite[Theorem 5.1]{FHSTT14} relies on the general construction of \cite{NL75}, while our is quite 
elementary. 

\begin{theorem}
The Orlicz space $L^{\varphi}[0,\infty)$ is $1$-disjointly homogeneous if and only if $\varphi\in\Delta_2$, 
$\varphi^*\in \Delta_0$  and $\varphi$ is equivalent to the linear function for small arguments, i.e. there are 
constants $C, c, d, u_0 > 0$ such that for each $0 < u < u_0$ there holds
$$
c u \leq \varphi(d u) \leq Cu.
$$ 

\end{theorem}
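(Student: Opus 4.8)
The plan is to prove both implications by splitting a disjointly supported, normalized sequence $(f_n)$ according to where its mass lies: on the set $\{|f_n|\le 1\}$, where hypothesis (iii) makes $\varphi$ comparable to the identity and $L^{\varphi}[0,\infty)$ behaves like $L^1[0,\infty)$, and on $\{|f_n|>1\}$, which is a set of uniformly bounded measure, so that the behaviour there is governed by the (already known) $1$-disjointly homogeneous Orlicz space $L^{\varphi}[0,1]$. As a preliminary observation, since $\varphi$ is convex with $\varphi(0)=0$ the ratio $\varphi(u)/u$ is nondecreasing, so the upper bound in (iii) is automatic and (iii) is simply equivalent to $\varphi'(0^+)=\inf_{u>0}\varphi(u)/u>0$; in that case $\varphi(u)\asymp u$ on all of $(0,1]$, whence $L^{\varphi}[0,\infty)\hookrightarrow L^1[0,\infty)$ with $\|g\|_1\le\varphi'(0^+)^{-1}\|g\|_{\varphi}$. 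I will also freely use that $L^{\varphi}[0,\infty)$ is order continuous exactly when $\varphi\in\Delta_2$ (for all $u$), that then $\|f\|_{\varphi}=1$ forces $I_{\varphi}(f)=1$, that every $1$-disjointly homogeneous Banach lattice is order continuous, and the cited fact (together with Theorem~\ref{charakteryzacja} and the Astashkin--Kalton--Sukochev description on $[0,1]$) that $L^{\varphi}[0,1]$ is $1$-disjointly homogeneous precisely when $\varphi\in\Delta_2^{\infty}$ and $\varphi^*\in\Delta_0$.

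For necessity, suppose $L^{\varphi}[0,\infty)$ is $1$-disjointly homogeneous. Being such it is order continuous, so $\varphi\in\Delta_2$; extending disjoint normalized sequences of $L^{\varphi}[0,1]$ by zero shows $L^{\varphi}[0,1]$ is also $1$-disjointly homogeneous, hence $\varphi\in\Delta_2^{\infty}$ and $\varphi^*\in\Delta_0$. It remains to obtain (iii). If it fails then $\varphi'(0^+)=0$; fix $c>0$ with $\varphi(c)>0$, choose pairwise disjoint sets $E_n$ with $m(E_n)=1/\varphi(c)$, and put $f_n=c\chi_{E_n}$. Then $I_{\varphi}(f_n)=1$, so $\|f_n\|_{\varphi}=1$, while the modular identity $I_{\varphi}\big(\tfrac{c}{N\lambda}\chi_{\cup_{k\le N}E_{n_k}}\big)=\tfrac{N}{\varphi(c)}\varphi\big(\tfrac{c}{N\lambda}\big)$ gives, for any subsequence, $\big\|\tfrac1N\sum_{k=1}^N f_{n_k}\big\|_{\varphi}=\tfrac{c}{\varphi(c)}\cdot\tfrac{s_N}{\varphi^{-1}(s_N)}$ with $s_N=\varphi(c)/N\to0$; this tends to $0$ because $\varphi^{-1}(s)/s\to\infty$ as $s\to0^+$. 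Hence no subsequence of $(f_n)$ is equivalent to the $\ell^1$-basis, a contradiction.

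For sufficiency, assume $\varphi\in\Delta_2$, $\varphi^*\in\Delta_0$ and (iii), and let $(f_n)$ be normalized, disjointly supported and (without loss of generality) positive, so $I_{\varphi}(f_n)=1$. If $\liminf_n\|f_n\|_1>0$, then a tail of $(f_n)$ is already $\ell^1$-equivalent: for finitely supported scalars $a$, disjointness and the embedding into $L^1$ give $\|\sum_k a_kf_{n_k}\|_{\varphi}\ge\varphi'(0^+)\sum_k|a_k|\,\|f_{n_k}\|_1\gtrsim\|a\|_1$, while $\|\sum_k a_kf_{n_k}\|_{\varphi}\le\|a\|_1$ is trivial. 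Otherwise pass to a subsequence with $\|f_n\|_1\to0$; then $m(\{f_n>1\})\le\|f_n\|_1\to0$ and $\int_{\{f_n\le1\}}\varphi(f_n)\le\varphi(1)\|f_n\|_1\to0$, so $h_n:=f_n\chi_{\{f_n>1\}}$ satisfies $I_{\varphi}(h_n)\to1$, whence $\|h_n\|_{\varphi}\in[\tfrac12,1]$ eventually. Passing to a further subsequence with $\sum_n m(\{f_n>1\})<1$, pack the decreasing rearrangements $h_n^{*}$ disjointly inside $[0,1]$ to get disjoint functions $\tilde h_n$ on $[0,1]$ equimeasurable with $h_n$; since the $\varphi$-modular of a disjoint sum depends only on the individual distributions, $\|\sum_n a_nh_n\|_{L^{\varphi}[0,\infty)}=\|\sum_n a_n\tilde h_n\|_{L^{\varphi}[0,1]}$ for all $a$. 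As $L^{\varphi}[0,1]$ is $1$-disjointly homogeneous and the norms $\|\tilde h_n\|$ are bounded away from $0$ and $\infty$, a subsequence $(\tilde h_{n_j})$, hence $(h_{n_j})$, is $\ell^1$-equivalent; finally disjointness of the supports of the $f_n$ yields $|\sum_j a_jh_{n_j}|\le|\sum_j a_jf_{n_j}|$ pointwise, so $\|\sum_j a_jf_{n_j}\|_{\varphi}\ge\|\sum_j a_jh_{n_j}\|_{\varphi}\gtrsim\|a\|_1$, and again the reverse inequality is immediate. The step I expect to be most delicate is handling the genuinely infinite-measure behaviour — disjoint sequences whose supports escape to infinity — and the crucial point is precisely that $\|f_n\|_1\to0$ forces the large-value parts onto sets of vanishing measure, which can then be transported into $[0,1]$ so that the finite-interval theory applies.
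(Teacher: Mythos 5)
Your proof is correct and follows essentially the same route as the paper's: the dichotomy according to whether $\|f_n\|_1$ stays bounded away from zero or tends to zero, the transport of the tall parts $f_n\chi_{\{f_n>1\}}$ (equimeasurably) onto $[0,1]$ so as to invoke the known $1$-disjoint homogeneity of $L^{\varphi}[0,1]$, and disjoint indicator functions for the necessity of linearity near zero. The only cosmetic differences are that the paper truncates at growing levels $k$ rather than at the fixed level $1$, and for necessity identifies the span of $\chi_{[n-1,n]}$ with $\ell^{\varphi}$ instead of computing the norms of the Ces\`aro means directly.
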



\proof Sufficiency follows by an argument analogous to the one from previous theorem, since under our 
assumptions 
$$
L^{\varphi}[0,\infty) = (L^{\varphi}\cap L^1)[0,\infty).
$$
The only difference appears when proving the claim (\ref{star}) from the previous proof, thus we will provide 
a detailed argument only for the claim: if $(f_n)\subset  L^{\varphi}[0,\infty)$ is a sequence of disjoint positive 
functions such that $\|f_n\|_1\to0$ and  $\|f_n\|_{\varphi}=1$, then for each $k\in \mathbb{N}$ there is 
$n\in \mathbb{N}$ such that
\begin{equation}\label{starfi}
\norm{f_n\chi_{\{f_n>k\}}}_{\varphi}> 1/2.
\end{equation}
Assume it is not the case, this means there is $k_0$ such that for each $n$ 
\begin{equation}\label{niestarfi}
\norm{f_n\chi_{\{f_n>k_0\}}}_{\varphi}\leq 1/2,
\end{equation}
which implies 
\begin{equation}
\norm{f_n\chi_{\{f_n\leq k_0\}}}_{\varphi}> 1/2.
\end{equation}
The latter means that 
$$
\int_{\{f_n\leq k_0\}}\varphi (2f_n(t))dt>1.
$$
Notice that convexity of $\varphi$ implies
$$
\varphi(2t)\leq \frac{\varphi(2k_0)}{k_0}t {\rm \ for\ each\ }0<t\leq k_0.
$$
Consequently,
$$
1< \int_{\{f_n\leq k_0\}}\varphi (2f_n(t))dt\leq \frac{\varphi(2k_0)}{k_0}\int_{\{f_n\leq k_0\}}f_n(t)dt,
$$
this is 
$$
\|f_n\|_1\geq \frac{k_0}{\varphi(2k_0)},
$$
for each $n$ and we arrived to contradiction with $\|f_n\|_1\to0$.

Necessity of $\varphi_*\in \Delta_0$ is evident by the Astashkin, Kalton and Sukochev \cite[Proposition 5.8]{AKS08}. 
Also, each $1$-DH space is separable, so  $\varphi\in\Delta_2$. To see that $\varphi$ has to be equivalent 
to the linear function for small arguments it is enough to consider the following sequence
$$
f_n=\chi_{[n-1, n]}, {\rm \ where\ } n \in \mathbb N.
$$
It is evident that each subsequence of $(f_n)$ is equivalent (even equal) to the unit basis of Orlicz 
sequence space $l^{\varphi}$. Thus $l^{\varphi}$ has to be equal to $l^1$, which happen if and only if 
$\varphi$ is equivalent to the identity function for small arguments.
\endproof

\section{The $\Delta_0$-condition and examples}

Let us recall that an Orlicz function $\varphi$ satisfies the {\it $\Delta_0$-condition}  if for some 
$\lambda > 1$ 
\begin{equation} \label{delta0}
\lim_{u \rightarrow \infty} \frac{\varphi (\lambda u)}{\varphi (u)} = \infty.
\end{equation}
Since it played the crucial role in the previous section, let us discuss this  condition. 
First of all notice, that contrary to the $\Delta_2^{\infty}$-condition, 
in the definition of $\Delta_0$-condition one cannot equivalently assume that (\ref{delta0}) holds for all 
$\lambda>1$, as can be seen in \cite[Example 7]{LLQR08}. Furthermore, the $\Delta_0$-condition may 
be seen as a strong negation of the $ \Delta_2^{\infty}$-condition. Going a slightly deeper into this 
analogy, if $\varphi\not \in\Delta_2^{\infty}$ then $L^{\varphi}$ contains a copy of $l^{\infty}$ based 
on a sequence of disjointly supported functions, while if $\varphi \in\Delta_0$ then each normalized 
sequence of disjointly supported functions contains a subsequence that spans $l^{\infty}$ (and this 
is how $\Delta_0$-condition connects with 1-DH spaces by duality). 

In addition to the previously mentioned papers, condition $\varphi^* \in \Delta_0$ appears 
in the papers \cite{Al98}, \cite{AS18}, \cite{La08} and \cite{St19}.

The so-called Matuszewska--Orlicz indices of Orlicz functions are helpful in presenting specific 
examples of Orlicz's functions with the $\Delta_0$-condition. The {\it lower and upper 
Matuszewska--Orlicz indices}  (for large arguments or at infinity) 
$\alpha_{\varphi}^{\infty}, \beta_{\varphi}^{\infty}$ of an Orlicz function $\varphi$ are 
defined by
$$
\alpha_{\varphi}^{\infty} = \lim\limits_{t\rightarrow 0^+}\frac{\ln{M(t,\varphi)}}{\ln{t}},\ 
\beta_{\varphi}^{\infty}=\lim\limits_{t\rightarrow \infty}\frac{\ln{M(t,\varphi)}}{\ln{t}},
$$
where $M_{\varphi}^{\infty}(t) = \limsup\limits_{u\rightarrow \infty}\frac{\varphi(tu)}{\varphi(u)}$. 
The basic properties of these indices are: 
$1 \leq \alpha_{\varphi}^{\infty} \leq \beta_{\varphi}^{\infty} \leq \infty$, $\beta_{\varphi}^{\infty}< \infty$ 
if and only if $\varphi \in \Delta_2^{\infty}$ and
\begin{equation} \label{indeksy}
\frac{1}{\alpha_{\varphi}^{\infty}} + \frac{1}{\beta_{\varphi^*}^{\infty}} = \frac{1}{\alpha_{\varphi^*}^{\infty}} 
+ \frac{1}{\beta_{\varphi}^{\infty}} = 1.
\end{equation} 
More information on indices can be found in \cite{KPS82}, \cite{Ma85} and \cite{Ma89}.
\vspace{2mm}

\begin{proposition}\label{delta0} 
If $\varphi \in \Delta_0$, then $\alpha_{\varphi}^{\infty} = \beta_{\varphi}^{\infty} = \infty$ and  
$\alpha_{\varphi^*}^{\infty} = \beta_{\varphi^*}^{\infty} =1$.
\end{proposition}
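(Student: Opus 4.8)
The plan is to evaluate the dilation function $M_{\varphi}^{\infty}(t)=\limsup_{u\to\infty}\varphi(tu)/\varphi(u)$ at the two scales $t=\lambda$ and $t=1/\lambda$, where $\lambda>1$ is the constant witnessing $\varphi\in\Delta_{0}$, and then read the indices directly off their definitions; the duality identity (\ref{indeksy}) then transfers the conclusion to $\varphi^{*}$.

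First I would note that $t\mapsto M_{\varphi}^{\infty}(t)$ is nondecreasing, since $\varphi$ is increasing and hence $\varphi(su)\le\varphi(tu)$ whenever $0<s\le t$. By definition $\varphi\in\Delta_{0}$ means $\lim_{u\to\infty}\varphi(\lambda u)/\varphi(u)=\infty$, so in particular $M_{\varphi}^{\infty}(\lambda)=\infty$, and by monotonicity $M_{\varphi}^{\infty}(t)=\infty$ for every $t\ge\lambda$. On the other hand, substituting $v=u/\lambda$ gives
$$
M_{\varphi}^{\infty}(1/\lambda)=\limsup_{u\to\infty}\frac{\varphi(u/\lambda)}{\varphi(u)}=\limsup_{v\to\infty}\frac{\varphi(v)}{\varphi(\lambda v)}=\Bigl(\lim_{v\to\infty}\frac{\varphi(\lambda v)}{\varphi(v)}\Bigr)^{-1}=0,
$$
so again by monotonicity $M_{\varphi}^{\infty}(t)=0$ for every $0<t\le 1/\lambda$.

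Plugging these into the definitions: for $t\ge\lambda$ the quotient $\ln M_{\varphi}^{\infty}(t)/\ln t$ equals $(+\infty)/\ln t=+\infty$ because $\ln t>0$, hence $\beta_{\varphi}^{\infty}=\lim_{t\to\infty}\ln M_{\varphi}^{\infty}(t)/\ln t=\infty$; for $0<t\le 1/\lambda$ it equals $(-\infty)/\ln t=+\infty$ because $\ln t\le-\ln\lambda<0$, hence $\alpha_{\varphi}^{\infty}=\lim_{t\to 0^{+}}\ln M_{\varphi}^{\infty}(t)/\ln t=\infty$. This gives $\alpha_{\varphi}^{\infty}=\beta_{\varphi}^{\infty}=\infty$. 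Finally, inserting these values into the relations (\ref{indeksy}), i.e. $\tfrac{1}{\alpha_{\varphi}^{\infty}}+\tfrac{1}{\beta_{\varphi^{*}}^{\infty}}=1$ and $\tfrac{1}{\alpha_{\varphi^{*}}^{\infty}}+\tfrac{1}{\beta_{\varphi}^{\infty}}=1$, forces $\beta_{\varphi^{*}}^{\infty}=1$ and $\alpha_{\varphi^{*}}^{\infty}=1$; combined with $1\le\alpha_{\varphi^{*}}^{\infty}\le\beta_{\varphi^{*}}^{\infty}$ this yields $\alpha_{\varphi^{*}}^{\infty}=\beta_{\varphi^{*}}^{\infty}=1$. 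The only point requiring care is the arithmetic with infinities in the quotients defining $\alpha_{\varphi}^{\infty}$ and $\beta_{\varphi}^{\infty}$: one must keep track of the sign of $\ln t$ so that both one-sided limits come out as $+\infty$. Apart from that the argument uses nothing beyond monotonicity of $M_{\varphi}^{\infty}$ and a single change of variables, so I anticipate no genuine difficulty.
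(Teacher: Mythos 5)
Your proof is correct and follows essentially the same route as the paper's: estimate the dilation function $M_{\varphi}^{\infty}$ at the scales $\lambda$ and $1/\lambda$ to get $\alpha_{\varphi}^{\infty}=\beta_{\varphi}^{\infty}=\infty$, then transfer to $\varphi^{*}$ via the duality relations (\ref{indeksy}). The only cosmetic difference is that the paper obtains $\beta_{\varphi}^{\infty}=\infty$ by citing the equivalence ``$\beta_{\varphi}^{\infty}<\infty$ iff $\varphi\in\Delta_2^{\infty}$'', whereas you compute $M_{\varphi}^{\infty}(\lambda)=\infty$ directly and you make explicit the change of variables showing $M_{\varphi}^{\infty}(1/\lambda)=0$, which the paper leaves implicit.
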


\begin{proof}
If $\varphi \in \Delta_0$, then $\varphi \notin \Delta_2^{\infty}$ and $\beta_{\varphi}^{\infty} = \infty$. 
Hence, by (\ref{indeksy}), $\alpha_{\varphi^*}^{\infty} = 1$. On the other hand, if $\varphi \in \Delta_0$, 
then for any $\eta > \lambda$ we have
\begin{eqnarray*} \label{delta0}
\liminf_{u \rightarrow \infty} \frac{\varphi (\eta u)}{\varphi (u)} &\geq&
\liminf_{u \rightarrow \infty} \frac{\varphi (\lambda u)}{\varphi (u)} 
= \lim_{u \rightarrow \infty} \frac{\varphi (\lambda u)}{\varphi (u)} = \infty,
\end{eqnarray*}
and so $\alpha_{\varphi}^{\infty} = \infty$, from which by (\ref{indeksy}) we get 
$\beta_{\varphi^*}^{\infty} =1$. In particular, $\varphi^* \in \Delta_2^{\infty}$.
\end{proof}

We do not know any example of an Orlicz 
function $\varphi$ with $\alpha_{\varphi}=\beta_{\varphi} = \infty$ and 
$\varphi \not\in \Delta_0$. Let us therefore state the question:

\begin{question}
Does $\alpha_{\varphi}=\beta_{\varphi}=\infty$ imply $\varphi\in \Delta_0$?
\end{question}

Nevertheless, using Simonenko indices instead of Matuszewska--Orlicz we can formulate sufficient 
condition for $\varphi \in \Delta_0$. 

\begin{theorem}\label{ind}
If an Orlicz function $\varphi$ satisfies
\begin{equation}\label{ind1}
\lim_{u\to \infty}\frac{u\varphi'(u)}{\varphi(u)}=1,
\end{equation}
then $\lim\limits_{u \to \infty} \frac{\varphi^* (\lambda u)}{\varphi^* (u)} = \infty$ for any $\lambda > 1$. 
In particular, $\varphi^* \in \Delta_0$.
\end{theorem}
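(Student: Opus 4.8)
The plan is to work directly with the conjugate function through its variational definition, rather than through the Matuszewska--Orlicz indices (which by themselves would only yield $\alpha_{\varphi^*}^{\infty}=\beta_{\varphi^*}^{\infty}=\infty$, not yet known to force $\Delta_0$). Fix $\lambda>1$. Since $\varphi$ is coercive, for every $v>0$ the concave function $u\mapsto uv-\varphi(u)$ attains its supremum at some point $u_v\ge 0$, so that
\[
\varphi^*(v)=u_v v-\varphi(u_v),\qquad \varphi'_-(u_v)\le v\le\varphi'_+(u_v).
\]
(For concreteness one may read $\varphi'$ as the right derivative; the minor adjustments at the countably many points where $\varphi$ fails to be differentiable are routine.) Moreover $u_v\to\infty$ as $v\to\infty$: the derivative $\varphi'_+$ is nondecreasing and finite, and coerciveness gives $\varphi'_+(u)\ge\varphi(u)/u\to\infty$, so $\varphi'_+(u_v)\ge v\to\infty$ is incompatible with $u_v$ staying bounded.

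The core of the argument is the elementary lower bound obtained by using $u_v$ as a test point for $\varphi^*(\lambda v)$:
\[
\varphi^*(\lambda v)\ \ge\ \lambda v\,u_v-\varphi(u_v)\ =\ \varphi^*(v)+(\lambda-1)\,v\,u_v ,
\]
whence, for all large $v$ (for which $\varphi^*(v)>0$),
\[
\frac{\varphi^*(\lambda v)}{\varphi^*(v)}\ \ge\ 1+(\lambda-1)\,\frac{v\,u_v}{v\,u_v-\varphi(u_v)}\ =\ 1+\frac{\lambda-1}{\,1-\varphi(u_v)/(v\,u_v)\,}.
\]
It remains only to check that $\varphi(u_v)/(v\,u_v)\to 1$ as $v\to\infty$. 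Since $\varphi'_+$ is nondecreasing, $\varphi(u)=\int_0^u\varphi'_+\le u\,\varphi'_-(u)$, and together with $\varphi'_-(u_v)\le v\le\varphi'_+(u_v)$ this gives
\[
\frac{\varphi(u_v)}{u_v\,\varphi'_+(u_v)}\ \le\ \frac{\varphi(u_v)}{v\,u_v}\ \le\ \frac{\varphi(u_v)}{u_v\,\varphi'_-(u_v)}\ \le\ 1 ;
\]
as $u_v\to\infty$, hypothesis (\ref{ind1}) forces the leftmost quantity to tend to $1$, so by squeezing all the terms tend to $1$. Hence the displayed lower bound for $\varphi^*(\lambda v)/\varphi^*(v)$ tends to $+\infty$.

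Since $\lambda>1$ was arbitrary, $\lim_{v\to\infty}\varphi^*(\lambda v)/\varphi^*(v)=\infty$ for every $\lambda>1$; in particular $\varphi^*\in\Delta_0$. An alternative, dual route leads to the same place: Young's equality $\varphi(u)+\varphi^*(\varphi'(u))=u\varphi'(u)$ gives, after the substitution $v=\varphi'(u)$ (so that $(\varphi^*)'(v)=u$), the identity $v(\varphi^*)'(v)/\varphi^*(v)=\bigl(1-\varphi(u)/(u\varphi'(u))\bigr)^{-1}\to\infty$; thus for each $N$ one has $(\varphi^*)'(v)/\varphi^*(v)\ge N/v$ for all large $v$, and integrating over $[u,\lambda u]$ yields $\varphi^*(\lambda u)/\varphi^*(u)\ge\lambda^{N}$ for all large $u$, with $N$ arbitrary. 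I expect the only genuine difficulty to be the asymptotic bookkeeping --- establishing $u_v\to\infty$ and controlling the one-sided derivatives --- rather than the main inequality, which is immediate.
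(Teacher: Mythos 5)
Your proof is correct, and it takes a genuinely different route from the paper's. The paper never touches the maximizer in the Legendre transform: it first proves an auxiliary fact ($\lim_{u\to\infty} u\psi'(u)/\psi(u)=\infty$ implies $\psi(\lambda u)/\psi(u)\to\infty$ for every $\lambda>1$, by integrating the logarithmic derivative), then replaces $\varphi$ by the regularization $\varphi_1(u)=\int_0^u p(t)\,dt$ with $p(t)=\varphi(t)/t$, for which Young's equality $uv=\varphi_1(u)+\varphi_1^*(v)$ at $v=p(u)$ is exact; it deduces $v\,p^{-1}(v)/\varphi_1^*(v)\to\infty$, applies the auxiliary fact to $\varphi_1^*$, and finally transfers the conclusion back to $\varphi^*$ via the two-sided comparison $\varphi^*\le\varphi_1^*$ and $\frac{1}{\eta-1}\varphi_1^*(\frac{\eta-1}{\eta}u)\le\varphi^*(u)$. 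Your argument short-circuits all of this: testing $\varphi^*(\lambda v)$ at the maximizer $u_v$ for $\varphi^*(v)$ gives the clean quantitative bound $\varphi^*(\lambda v)/\varphi^*(v)\ge 1+(\lambda-1)\bigl(1-\varphi(u_v)/(v u_v)\bigr)^{-1}$, and the hypothesis enters only through the squeeze showing $\varphi(u_v)/(v u_v)\to 1$. This is shorter, avoids both the auxiliary function and the final transfer step, and makes the mechanism transparent: the closer $\varphi$ is to linear growth, the smaller the Young's-inequality gap $vu_v-\varphi(u_v)$ is relative to $vu_v$. The only care needed is the point you already flag --- reading $\varphi'$ as a one-sided derivative of the convex function --- which is precisely the issue the paper sidesteps by passing to the everywhere-differentiable $\varphi_1$; your reduction is routine since $\varphi(u)\le u\varphi'_+(u)$ gives the lower bound for free and the upper bound follows by comparing with nearby differentiability points. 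Your ``alternative dual route'' at the end is essentially the paper's argument applied directly to $\varphi$ instead of $\varphi_1$, and also works modulo the same caveat.
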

\proof
First of all, let's see that if $\lim_{u\to \infty}\frac{u\varphi'(u)}{\varphi(u)} = \infty$, then
\begin{equation}\label{regular}
\lim\limits_{u \to \infty} \frac{\varphi (\lambda u)}{\varphi(u)} = \infty ~{\rm for ~ any} ~\lambda > 1.
\end{equation}
Really, since $\psi(t):= \frac{t \varphi'(t)}{\varphi(t)} > c$ for any large $c$ and large enough $t$, 
it follows for arbitrary $\lambda > 1$ that
$$
\frac{\varphi(\lambda u)}{\varphi(u)} = \exp (\int_u^{\lambda u} \frac{\psi(t)}{t} \, dt)
\geq \exp (\int_u^{\lambda u} \frac{c}{t} \, dt) = \lambda^c 
$$
which gives (\ref{regular}).

Now, we are ready to prove Theorem \ref{ind}. Define for $t,u>0$
\[
p(t)=\frac{\varphi(t)}{t}\ \ {\rm \ and\ } \ \ \varphi_1(u)=\int_0^up(t)dt.
\]
Then, since $p$ is non-decreasing, for each $\lambda>1$ there holds
\[
1\geq \lim_{u\to \infty}\frac{\varphi_1(u)}{up(u)}\geq \lim_{u\to \infty}\frac{\int_{u/\lambda}^u p(t)dt}{up(u)}
\geq \lim_{u\to \infty}\frac{p(u/\lambda)(1-1/\lambda)}{p(u)}=1-1/\lambda,
\]
which means that 
\begin{equation}\label{ind2}
\lim_{u\to \infty}\frac{\varphi_1(u)}{up(u)}=1. 
\end{equation}
Let $v>0$ and $u=p^{-1}(v)$ (where $p^{-1}$ denotes the right continuous inverse of $p$), then 
\[
uv=up(u)=\varphi_1(u)+\varphi_1^*(v),
\]
i.e. 
\[
\frac{\varphi_1(u)}{up(u)}+\frac{\varphi_1^*(v)}{vp^{-1}(v)}=1.
\]
Thus (\ref{ind2}) implies 
\[
\lim_{v\to \infty}\frac{vp^{-1}(v)}{\varphi_1^*(v)}=\infty.
\]
According to (\ref{regular}) it means that 
\begin{equation}\label{ind3}
\lim_{u\to \infty}\frac{\varphi_1^*(\lambda u)}{\varphi_1^*(u)}=\infty,
\end{equation}
for each $\lambda>1$. 
But, once again, monotonicity of $p$ gives $\varphi_1(u)\leq \varphi(u)$ for each $u>0$, which, in turn, implies 
 $\varphi_1^*(u)\geq \varphi^*(u)$ for each $u>0$.
On the other hand, for each $\eta>1$ and $u>0$ we have 
\[
\varphi_1(\eta u)\geq \int_{u}^{\eta u}p(t)dt\geq u p(u)(\eta-1)=\varphi(u)(\eta-1).
\]
Thus, making respective substitution in the definition of conjugate functions, we conclude that also
 $\frac{1}{\eta-1}\varphi_1^*(\frac{\eta-1}{\eta}u)\leq \varphi^*(u)$ for each $u>0$.
Finally, let $\lambda>1$ and choose $\eta>1$ in such a way, that $\lambda \frac{\eta-1}{\eta}>1$. 
We have finally by (\ref{ind3})
\[
\lim_{u\to \infty}\frac{\varphi^*(\lambda u)}{\varphi^*(u)}\geq 
\lim_{u\to \infty}\frac{\frac{1}{\eta-1}\varphi_1^*(\lambda \frac{\eta-1}{\eta} u)}{\varphi_1^*(u)}=\infty. 
\]
\endproof

The {\it lower} and {\it upper Simonenko indices} (at infinity) $a_{\varphi}^{\infty}, b_{\varphi}^{\infty}$ of 
an Orlicz function $\varphi$ are defined by
\[
a_{\varphi}^{\infty} = \liminf_{u \to \infty} \frac{u \varphi^{\prime}(u)}{\varphi(u)}, \, b_{\varphi}^{\infty} = \limsup_{u \to \infty} \frac{u \varphi^{\prime}(u)}{\varphi(u)}.
\]
The basic properties are (see \cite{KR61}, \cite{Ma85}, \cite{Ma89}, \cite{RR91}): 
$1 \leq a_{\varphi}^{\infty} \leq \alpha_{\varphi}^{\infty} \leq \beta_{\varphi}^{\infty} \leq b_{\varphi}^{\infty} \leq \infty$, 
$b_{\varphi}^{\infty} < \infty$ if and only if $\varphi \in \Delta_2^{\infty}$, and for an N-function $\varphi$ we have
\begin{equation}\label{Simo}
\frac{1}{a_{\varphi}^{\infty}} + \frac{1}{b_{\varphi^*}^{\infty}} = \frac{1}{a_{\varphi^*}^{\infty}} + \frac{1}{b_{\varphi}^{\infty}} = 1.
\end{equation}

\begin{corollary}
If for an N-function $\varphi$ we have $a_{\varphi}^{\infty}=b_{\varphi}^{\infty}=\infty$, then $\varphi \in \Delta_0$. 
\end{corollary}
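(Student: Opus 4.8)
The plan is short: the hypothesis is exactly the assumption already disposed of at the very start of the proof of Theorem~\ref{ind}, only rephrased through the Simonenko indices. Writing $\psi(u):=\frac{u\varphi'(u)}{\varphi(u)}$ one has $a_\varphi^\infty=\liminf_{u\to\infty}\psi(u)$ and $b_\varphi^\infty=\limsup_{u\to\infty}\psi(u)$, so the equality $a_\varphi^\infty=b_\varphi^\infty=\infty$ is precisely the statement $\lim_{u\to\infty}\psi(u)=\infty$ (and in fact $a_\varphi^\infty=\infty$ alone already forces this, since $\liminf\psi=\infty$ implies $\lim\psi=\infty$).

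I would then quote formula (\ref{regular}) from the proof of Theorem~\ref{ind}, which was established under exactly this hypothesis: given $\lambda>1$ and an arbitrary $c>0$, choose $u_0$ with $\psi(t)>c$ for all $t\geq u_0$; then for $u\geq u_0$
\[
\frac{\varphi(\lambda u)}{\varphi(u)}=\exp\Big(\int_u^{\lambda u}\frac{\psi(t)}{t}\,dt\Big)\geq\exp\Big(\int_u^{\lambda u}\frac{c}{t}\,dt\Big)=\lambda^{c},
\]
so letting $c\to\infty$ yields $\lim_{u\to\infty}\frac{\varphi(\lambda u)}{\varphi(u)}=\infty$ for every $\lambda>1$. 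In particular this holds for some $\lambda>1$, which is the defining property of the $\Delta_0$-condition (that $\lim_{u\to\infty}\varphi(\lambda u)/\varphi(u)=\infty$ for some $\lambda>1$); hence $\varphi\in\Delta_0$.

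I do not anticipate any genuine obstacle here: the whole argument is contained in the opening paragraph of the proof of Theorem~\ref{ind}, and the corollary simply records that $a_\varphi^\infty=b_\varphi^\infty=\infty$ is that paragraph's hypothesis written in index notation. If one wants a fuller statement, this may be combined with the preceding proposition and the duality identity (\ref{Simo}) to add that then also $\alpha_\varphi^\infty=\beta_\varphi^\infty=\infty$ and $\varphi^*\in\Delta_2^\infty$ with $a_{\varphi^*}^\infty=b_{\varphi^*}^\infty=1$.
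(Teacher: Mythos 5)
Your proof is correct, but it takes a genuinely different (and more direct) route than the paper's. The paper argues by duality: from $a_{\varphi}^{\infty}=b_{\varphi}^{\infty}=\infty$ and the Simonenko duality relations (\ref{Simo}) it deduces $a_{\varphi^*}^{\infty}=b_{\varphi^*}^{\infty}=1$, i.e.\ condition (\ref{ind1}) for $\varphi^*$, then invokes the full statement of Theorem \ref{ind} applied to $\varphi^*$ together with the involution $(\varphi^*)^*=\varphi$ to conclude $\varphi\in\Delta_0$. You instead observe that $a_{\varphi}^{\infty}=\liminf_{u\to\infty}\frac{u\varphi'(u)}{\varphi(u)}=\infty$ already means $\lim_{u\to\infty}\frac{u\varphi'(u)}{\varphi(u)}=\infty$, and apply only the short auxiliary computation establishing (\ref{regular}) at the opening of that proof (the identity $\varphi(\lambda u)/\varphi(u)=\exp\bigl(\int_u^{\lambda u}\psi(t)\,dt/t\bigr)\geq\lambda^{c}$) directly to $\varphi$ itself. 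This buys several things: you bypass the duality formula (\ref{Simo}), the involution argument, and the entire Young-conjugate machinery in the body of Theorem \ref{ind}; you do not actually need $\varphi$ to be an N-function (only convexity, which makes $\varphi$ locally absolutely continuous with nondecreasing right derivative, is used, whereas (\ref{Simo}) is stated only for N-functions); you get the stronger conclusion that the limit is infinite for \emph{every} $\lambda>1$, not merely some; and, as you note, the hypothesis $a_{\varphi}^{\infty}=\infty$ alone suffices. The paper's route has the virtue of being a one-line deduction from Theorem \ref{ind} treated as a black box and of displaying the duality symmetry, but your argument is self-contained and slightly more general.
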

\proof
Using relations (\ref{Simo}) we obtain condition (\ref{ind1}) for $\varphi^*$ and by the fact that conjugation 
$\varphi\mapsto \varphi^*$ is an involution, we conclude that $\varphi \in \Delta_0$.
\endproof

Notice that $a_{\varphi}^{\infty}=b_{\varphi}^{\infty}=\infty$ cannot be necessary for $\varphi \in \Delta_0$ because 
of Theorem \ref{ind} and Example 7 in \cite{LLQR08}.

\begin{example}
The N-function 
\begin{equation*}
~~\varphi (u) = 
\begin{cases}
\frac{u^2}{2} & ~\mathrm{if} ~0 \leq u \leq 1, \\ 
u \ln u + \frac{1}{2} & ~\mathrm{if} ~u \geq 1,
\end{cases}
\end{equation*}
satisfies the $\Delta_2$-condition (even for all $u > 0$) and its 
complementary function
\begin{equation*}
~~\varphi^* (u) = 
\begin{cases}
\frac{u^2}{2} & ~\mathrm{if} ~0 \leq u \leq 1, \\ 
e^{u-1} - \frac{1}{2} & ~\mathrm{if} ~u \geq 1,
\end{cases}
\end{equation*}
satisfies the $\Delta_0$-condition.
\end{example}

 In many cases it is impossible to find explicit formula for the complementary N-function, but
using Theorem \ref{ind} we can easily see why $\varphi^* \in \Delta_0$.

\begin{example}
For Orlicz functions $\varphi_r (u) = u \ln^r(1+u), r > 0$, $\varphi_a (u) = u \sqrt{1 + a \ln(1+u)}, a > 0$, 
and $\varphi_b (u) = u \exp(\sqrt{1 + a \ln^+u}), b > 0$ we have 
\[
\frac{u \varphi_r ^{\prime}(u)}{\varphi_r (u)} = 1 + \frac{r u}{(1+u) \ln(1+u)} \to 1 ~{\rm as} ~ u \to \infty, 
\]
\[
\frac{u \varphi_a ^{\prime}(u)}{\varphi_a (u)} = 1 + \frac{a u}{2(1+u) [1+ a \ln(1+u)]} \to 1 ~{\rm as} ~ u \to \infty,
\]
and 
\[
\frac{u \varphi_b ^{\prime}(u)}{\varphi_b (u)} = 1 + \frac{a}{2\sqrt{1 + a \ln^+u}} \to 1 ~{\rm as} ~ u \to \infty.
\]
From Theorem \ref{ind} we obtain that $\varphi_r^*, \varphi_a^*, \varphi_b^* \in \Delta_0$. 
\end{example}

\section{Weak compactness of pointwise multipliers}

Finally we are in a position to characterize weakly compact pointwise multipliers between Banach 
function lattices. 

Given  two Banach function lattices $X$ and $Y$ on $\Omega$ we define the space of pointwise 
multipliers from $X$ to $Y$  as
\[
M(X, Y)=\{f\in L^0(\Omega) \colon fg\in Y\ {\rm for\ all\ } g\in X\},
\]
with the operator norm 
\[
\norm{f}_{M}=\sup_{\norm{g}_X\leq1}\norm{fg}_Y. 
\]
If $Y$ has the Fatou property, then the space $M(X, Y)$ has the Fatou property (see \cite{MP89}).
Of course, each $f\in M(X, Y)$ defines a pointwise multiplication operator 
$
M_f \colon X\rightarrow Y
$ 
by 
$$
M_f(g) = f g,\ \  g\in X
$$
and $\norm{M_f}=\norm{f}_{M(X, Y)}$.

\begin{lemma}
\textit{Let $X$ and $Y$ be Banach function lattices on $\Omega$. If $f\in (M(X, Y))_a$, then 
$M_f \colon X\to Y$ is a weakly compact operator.}
\end{lemma}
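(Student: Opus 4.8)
The plan is to show that $M_f$ carries the closed unit ball $B_X$ of $X$ into a $Y$-equi-integrable subset of $Y$, and then to invoke the fact recalled in the preliminaries (see \cite[Proposition 3.6.5]{MN91}) that in any Banach lattice every $Y$-equi-integrable set is relatively weakly compact. Since an operator is weakly compact precisely when the image of the closed unit ball is relatively weakly compact, this will suffice.

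First I would observe that $M_f(B_X)=\{fg\colon g\in X,\ \norm{g}_X\le 1\}$ is norm bounded in $Y$, which is immediate from $\norm{fg}_Y\le\norm{f}_M\norm{g}_X\le\norm{f}_M$. Next, given an arbitrary sequence of measurable sets $(A_n)$ with $A_n\downarrow\emptyset$, I would exploit the factorization $fg\chi_{A_n}=(f\chi_{A_n})g$ together with $\norm{f\chi_{A_n}}_M\le\norm{f}_M<\infty$ to obtain, for every $g\in B_X$,
\[
\norm{fg\chi_{A_n}}_Y=\norm{(f\chi_{A_n})g}_Y\le\norm{f\chi_{A_n}}_M\norm{g}_X\le\norm{f\chi_{A_n}}_M,
\]
and hence $\sup_{h\in M_f(B_X)}\norm{h\chi_{A_n}}_Y\le\norm{f\chi_{A_n}}_M$. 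Finally, since $f\in(M(X,Y))_a$ and $0\le|f|\chi_{A_n}\le|f|$ with $|f|\chi_{A_n}\to 0$ $\mu$-a.e., order continuity of $f$ in $M(X,Y)$ gives $\norm{f\chi_{A_n}}_M\to 0$; therefore $\sup_{h\in M_f(B_X)}\norm{h\chi_{A_n}}_Y\to 0$, i.e.\ $M_f(B_X)$ is $Y$-equi-integrable. Applying the quoted implication then shows $M_f(B_X)$ is relatively weakly compact, so $M_f$ is weakly compact.

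There is no serious obstacle here: the whole argument rests on the factorization inequality $\norm{(f\chi_A)g}_Y\le\norm{f\chi_A}_M\norm{g}_X$, which transfers the order continuity of $f$ in the multiplier space into uniform control of the tails of $M_f(B_X)$ in $Y$, combined with the already available implication that $Y$-equi-integrable sets are relatively weakly compact. The only points worth spelling out are that this last implication requires no order continuity assumption on $Y$ itself, and that the order continuity of the single element $f$ is used directly through the definition, so that no weak unit in $M(X,Y)$ needs to be invoked.
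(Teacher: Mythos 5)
Your proposal is correct and follows essentially the same route as the paper: both show that $M_f(B_X)$ is $Y$-equi-integrable via the identity $\sup_{g\in B(X)}\norm{fg\chi_{A_n}}_Y=\norm{f\chi_{A_n}}_{M}$ (you use the corresponding inequality, which suffices) and the order continuity of $f$ in $M(X,Y)$, and then invoke \cite[Proposition 3.6.5]{MN91} to conclude relative weak compactness. No gaps.
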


\begin{proof}
We will show that $M_f B(X)$ is $F$-equi-integrable set. Let $(A_n)$ be a sequence of measurable sets 
such that $A_n\downarrow\emptyset$. We have
\begin{align*}
\sup\limits_{h\in M_f B(X)}\norm{h\chi_{A_n}}_X=\sup\limits_{g\in B(X)}\norm{fg\chi_{A_n}}_X 
= \norm{f\chi_{A_n}}_M\rightarrow 0,
\end{align*}
as $n\rightarrow\infty$.
\end{proof}

In the case $Y$ satisfies the Dunford--Pettis criterion, we see that order continuity of $f \in M(X, Y)$ 
is also necessary for weak compactness of $M_f$. 

\begin{theorem}
\textit{Let $X$ and $Y$ be Banach function lattices on $\Omega$ such that $Y$ is 1-DH 
and let $f\in M(X, Y)$. Then $M_f \colon X\to Y$ is weakly compact if and only if $f \in (M(X, Y))_a$.}
\end{theorem}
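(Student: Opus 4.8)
The plan is to deduce this directly from the preceding Lemma together with Theorem~\ref{charakteryzacja}, so almost no new computation is required. The implication ``$f\in (M(X,Y))_a\Rightarrow M_f$ weakly compact'' is exactly the content of the previous Lemma and does not even use that $Y$ is $1$-DH; hence only the converse needs an argument.

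For the converse, suppose $M_f\colon X\to Y$ is weakly compact. By definition of a weakly compact operator, the image $M_f B(X)=\{fg:\norm{g}_X\leq 1\}$ of the unit ball is a bounded, relatively weakly compact subset of $Y$. Since $Y$ is $1$-disjointly homogeneous, Theorem~\ref{charakteryzacja} gives that $Y$ satisfies the Dunford--Pettis criterion, so $M_f B(X)$ is $Y$-equi-integrable; that is, $\sup_{h\in M_f B(X)}\norm{h\chi_{A_n}}_Y\to 0$ for every sequence of measurable sets with $A_n\downarrow\emptyset$.

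It remains only to read this as order continuity of $f$ in $M(X,Y)$. For any measurable $A\subset\Omega$ we have $f\chi_A\in M(X,Y)$, because $\abs{f\chi_A}\leq\abs{f}$ and $M(X,Y)$ is an ideal in $L^0$, and moreover
\[
\sup_{h\in M_f B(X)}\norm{h\chi_A}_Y=\sup_{\norm{g}_X\leq 1}\norm{(fg)\chi_A}_Y=\sup_{\norm{g}_X\leq 1}\norm{(f\chi_A)g}_Y=\norm{f\chi_A}_{M(X,Y)}.
\]
Applying this with $A=A_n$, $A_n\downarrow\emptyset$, the $Y$-equi-integrability of $M_f B(X)$ obtained above says precisely $\norm{f\chi_{A_n}}_{M(X,Y)}\to 0$. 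By the characterization of order continuous elements recorded in Section~2 (an element $\phi$ of a Banach function lattice $Z$ lies in $Z_a$ if and only if $\norm{\phi\chi_{A_n}}_Z\to 0$ whenever $A_n\downarrow\emptyset$), applied to $Z=M(X,Y)$, this is exactly the statement $f\in (M(X,Y))_a$.

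I do not anticipate a genuine difficulty: the proof is a dictionary linking weak compactness of $M_f$, $Y$-equi-integrability of $M_f B(X)$, and order continuity of $f$ in the multiplier space, with Theorem~\ref{charakteryzacja} supplying the one substantial step. The only point deserving a word of care is that the order-continuity criterion be legitimately applicable, i.e.\ that $M(X,Y)$ be an honest Banach function lattice (in particular an ideal in $L^0$, which always holds since $Y$ is an ideal); this was already implicit in the statement and proof of the preceding Lemma.
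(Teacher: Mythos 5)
Your proposal is correct and follows essentially the same route as the paper: the sufficiency is delegated to the preceding lemma, and for necessity one passes from weak compactness of $M_f$ to relative weak compactness of $M_fB(X)$, invokes Theorem~\ref{charakteryzacja} to get $Y$-equi-integrability, and reads off $\norm{f\chi_{A_n}}_{M(X,Y)}=\sup_{\norm{g}_X\leq 1}\norm{fg\chi_{A_n}}_Y\to 0$ as order continuity of $f$ in $M(X,Y)$. The extra care you take in justifying the identity between the equi-integrability supremum and the multiplier norm of $f\chi_{A_n}$ is a welcome elaboration of a step the paper leaves implicit.
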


\begin{proof}
Since $M_f$ is weakly compact then $M_f B(X)$ is relatively weakly compact subset of $Y$. From 
Theorem \ref{charakteryzacja} it follows that  $M_f B(X)$ is $F$-equi-integrable set. Thus for arbitrary 
sequence of measurable sets $(A_n)$ such that $A_n\downarrow\emptyset$ we have 
$$
\norm{f\chi_{A_n}}_M=\sup\limits_{g\in B(X)}\norm{fg\chi_{A_n}}_Y\rightarrow 0 \text{ when }n\rightarrow\infty,
$$
which means that $f\in (M(X, Y))_a$.
\end{proof}

Finally we present one immediate consequence of the previous theorem. Notice that there are in general 
no results describing properties of the space $M(X, Y)$ in terms of properties of spaces $X, Y$. 
It is just because properties of $M(X, Y)$ depend rather on coincidence of $X$ and $Y$, than on their 
properties separately. From this point of view the following conclusion seems surprising. 

\begin{corollary}
\textit{Let $X$ and $Y$ be Banach function lattices over $I$. If $X$ is a reflexive space and $Y$ is 
1-DH, then $M(X, Y)\in (OC)$.}
\end{corollary}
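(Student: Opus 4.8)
The plan is to reduce the statement to the preceding theorem by using the classical fact that a bounded linear operator defined on a reflexive space is automatically weakly compact. Recall that $M(X,Y)\in (OC)$ means precisely that $(M(X,Y))_a = M(X,Y)$, i.e. every element of $M(X,Y)$ has order continuous norm. So it suffices to fix an arbitrary $f\in M(X,Y)$ and prove that $f\in (M(X,Y))_a$.

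First I would observe that the multiplication operator $M_f\colon X\to Y$, $M_f(g)=fg$, is a bounded linear operator with $\norm{M_f}=\norm{f}_{M(X,Y)}<\infty$. Since $X$ is reflexive, its closed unit ball $B(X)$ is weakly compact, and because $M_f$ is bounded it is weak-to-weak continuous; hence $M_f B(X)$ is a relatively weakly compact subset of $Y$. In other words, $M_f$ is a weakly compact operator.

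Next I would invoke the previous theorem with these particular $X$ and $Y$: since $Y$ is $1$-DH and $f\in M(X,Y)$, weak compactness of $M_f$ is equivalent to $f\in (M(X,Y))_a$. Therefore $f$ is order continuous. As $f$ was an arbitrary element of $M(X,Y)$, we conclude $(M(X,Y))_a = M(X,Y)$, that is $M(X,Y)\in (OC)$.

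This derivation is essentially immediate, so there is no genuinely hard step; the only points worth a line of care are: (a) verifying that the hypotheses of the previous theorem are met verbatim, namely that $X,Y$ are Banach function lattices over $I$ with $Y$ being $1$-DH and $f\in M(X,Y)$; and (b) confirming that passing from ``$M_f$ is weakly compact for every $f$'' to ``$M(X,Y)$ is order continuous'' is legitimate, which it is, because $(OC)$ is by definition the assertion that the order continuous part exhausts the whole space. One may also note that the case $M(X,Y)=\{0\}$ is trivial.
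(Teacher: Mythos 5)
Your argument is correct and is precisely the intended one: since $X$ is reflexive, $B(X)$ is weakly compact and hence $M_f$ is a weakly compact operator for every $f\in M(X,Y)$, so the preceding theorem (with $Y$ being $1$-DH) gives $f\in (M(X,Y))_a$ for all $f$, i.e. $M(X,Y)\in (OC)$. The paper states the corollary as an immediate consequence of that theorem without writing out a proof, and your reasoning supplies exactly the missing two lines.
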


\subsection*{Acknowledgements} 
The authors would like to thank Professor Sergey V. Astashkin for consultations on the subject and his very valuable comments on the first version of the paper.
The first and the third author were
supported by the National Science Center (Narodowe Centrum Nauki), Poland (project
no. 2017/26/D/ST1/00060).


\end{document}